\renewcommand{\leq}{\leqslant}
\renewcommand{\geq}{\geqslant}
\newtheorem{lemma}{Lemma}
\newtheorem{definition}{Definition}
\newtheorem{proposition}{Proposition}
\newtheorem{conjecture}{Conjecture}
\DeclareMathAlphabet{\mathscr}{U}{dutchcal}{m}{n}
\SetMathAlphabet{\mathscr}{bold}{U}{dutchcal}{b}{n}
\DeclareMathAlphabet{\mathbscr} {U}{dutchcal}{b}{n}
\begin{document}

\title{\bf{On The Poisson Follower Model}}

\author[1]{Natasa Dragovic \thanks{natasa.dragovic@stthomas.edu}}
\author[2]{Francois Baccelli \thanks{francois.baccelli@ens.fr}}
\affil[1]{University of Saint Thomas}
\affil[2]{INRIA Paris}


\date{June 2026}
\maketitle

\begin{abstract}
We introduce a stochastic geometry dynamics inspired by opinion dynamics that captures the essence of modern asymmetric social networks with leaders and followers. Points in the Euclidean space represent opinions, and the leader of an agent is the one with the closest opinion. In this dynamics, each follower updates its opinion by halving the distance to its leader. We demonstrate that this simple dynamics and its iterations exhibit several interesting purely geometric phenomena related to the evolution of leadership and opinion clusters, which resemble those observed in social networks. We also show that when the initial opinions are randomly distributed as a stationary Poisson point process, the spatial frequency of each of these phenomena can be expressed through an integral geometry formula involving semi-algebraic domains. Finally, we analyze numerically the limiting behavior of this follower dynamics. In the Poisson case, the agents fall into two categories: ultimate followers, who continue updating their opinions indefinitely, and ultimate leaders, who adopt a fixed opinion after a finite time. Spatial discrete event simulations support all our findings.
\end{abstract}

\section{Introduction}

People's opinions and beliefs are influenced in complex ways by families, friends, colleagues, and media, as well
as by politicians and other mega-influencers \cite{Acemoglu_opinion,Bisin_2000,Bisin_2001,haensch2023,Boyd_1985,Cavalli_1981}.
In recent decades, attempts have been made to understand aspects of this process using mathematical modeling and computational simulation such as surveys on opinion dynamics
  \cite{Aydogdu_2017,Anderson_2019,Lorenz_2007,Mossel_2017,Proskurnikov_2017,Proskurnikov_2018}.

Many models of opinion dynamics are based on the assumption  that we are influenced more easily by people with whom we {\em almost} agree than by those whose views starkly differ from ours.
A similar but more general phenomenon, known as {\em biased assimilation}, is recognized among psychologists; referring to our tendency  to filter and interpret information in a way that supports our preconceived notions \cite{Lord_1979}. Models of opinion dynamics built on this assumption are known as {\em bounded confidence models} \cite{Aydogdu_2017,Canuto_2012,Mirtabatabaei_2012}.  A popular example is the Hegselmann and Krause model \cite{hegsel_krause_2002,Hegselmann_Krause_2005}, which builds on earlier work by Krause \cite{krause_1997,krause_2000}. It has been extensively studied in the literature \cite{Lorenz_2005, Lorenz_2006, Blondel_2009,Perrier_2024}, and will serve as our starting point here. The Hegselmann-Krause model examines the stochastic time evolution of a voter's opinion in response to the opinion of other like-minded voters.

The original Hegselmann-Krause model is discrete
in both time and  opinion space with a finite number of agents moving one step
at a time. Similar models that are
continuous in time \cite{Piccoli_2021}, 
opinion space \cite{Wedin_2015}, or opinion space and time \cite{Borgers_candidate_voter,Goddard_2022} have been proposed since then.

This paper proposes a mathematical model allowing one to analyze both short and long term opinion dynamics
of a countable collection of agents. At any time step of the dynamics, each agent has an
$\mathbb{R}^d$-valued opinion and can only
be influenced by the agent whose opinion is closest to its opinion, which is referred to as 
its \textit{leader}.
This new model is meant to capture the essence of such social network interactions
as Instagram and Twitter, where the asymmetric leader/follower dynamics is central.

The follower/leader connections form a directed graph,
where the direction goes from the follower towards the leader. 
Two agents which follow each other will be called a \textit{leader pair}. This structure defines a directed graph with different connected components. We will also show that, in this Poisson model, there are no cycles longer than 2 a.s. 
We think of a group of agents that are part of the same connected component
of this follower graph as a \textit{party}.

If the initial opinions form a random point process, the directed graph is generated over a random point set,
and is hence a random graph.
In several parts of this work, we will restrict ourselves to the case where the points of the initial configuration are
sampled according to a stationary Poisson point process over $\mathbb{R}^d$. 
This choice is dictated by tractability reasons
(other initial conditions will be considered in future steps). For instance, in this Poisson case, the
follower/leader random graph is called
the \textit{Poisson Nearest Neighbor graph} and
parties are called \textit{Poisson descending trees}.
They have been studied extensively \cite{nearest_neigh,haenggi}.
An example of such a tree is depicted in Figure \ref{fig:example} for $d=2$.

\begin{figure}
\centering
  \includegraphics[width=2.6in]{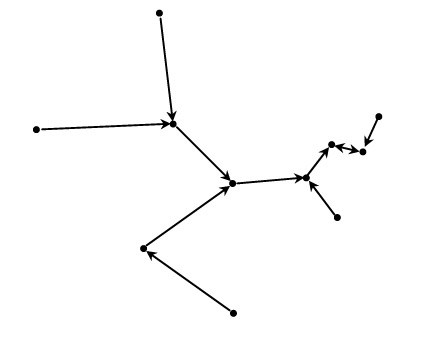}
  \caption{Example of a Poisson descending tree }
  \label{fig:example}
\end{figure}

The initial random graph evolves over time based on the prescribed dynamics.
More precisely, given an initial point process, 
at the first time step of the dynamics, each agent's opinion moves halfway to that of its leader, which will be shown to be uniquely defined in the Poisson model.
This gives a second point process on which a new 
follower/leader directed graph is defined. The dynamics is then applied to this new graph, which
defines a third point process, and so on.
The \textit{Poisson Follower Point Process of order $n$} is the point process obtained
at the $n^{th}$ iteration of this dynamics when starting from a Poisson Point Process.
At the $n^{th}$ iteration, the agents or points are the vertices of a graph that will be referred to as
the \textit{Poisson Follower graph of order $n$}. 
As we will see, interestingly a given agent may follow different leaders at different times, i.e., the structure of the Poisson
follower graph changes with $n$. 

This dynamics can be seen defined in contrast to the Hegselmann-Krause dynamics \cite{hegsel_krause_2002}.
The main difference is that opinion updates consist in averaging each opinion with 
that of its leader (in contrast to averaging each opinion over the set of opinions in a 
ball around it as in the Hegselmann-Krause dynamics). Also, the dynamics is 
extended to a particle system setting \cite{liggett_particle_systems}, namely to a stationary
point process of $\mathbb R^d$ (in contrast to the Hegselmann-Krause dynamics which is only discussed
on compact domains so far). 


One of the surprising facts is that this simple dynamics, purely determined by the relative
positions of agents, leads to a large collection of phenomena to which one can associate natural names
in view of what is observed in social networks.
Here are some instances of the phenomena observed in this dynamics in the Poisson case
and that are discussed in detail in Section \ref{definitions}:

\begin{itemize}
\item One agent can be a \textit{leader to several agents}. The maximal number depends on the dimension.
In dimension two, for all point configurations, a point can be a leader for up to 6 followers.
This number is at most 5 a.s. in the Poisson case. This is related to the kissing number
\cite{kissing_number}. 

\item Each initial party has a.s. exactly one "loop", i.e., only one pair of mutually closest 
neighbors, or equivalently a leader pair. 

\item If two points form a leader pair, then, after one step, their opinions merge 
and will never change.

\item At each step of the dynamics, an agent either keeps the same leader or follows another agent.
Similarly, a leader at any step can \textit{keep its followers, gain, or lose some followers}.

\item Surprisingly, two agents
in a follower/leader pair can swap their roles in one step of the dynamics. 
This \textit{follower inversion} is reminiscent of the mentor/mentored
scenario where the direction of leadership changes as time evolves.

\item At any step of the dynamics, agents can be arranged in connected components called parties bellow; finite parties can be characterized by their leader pairs.

\item As a result of this dynamics, at each step, agents either stay in their party or switch positions
within a party, or change party. Equivalently, under a step of the dynamics, \textit{a party can either
stay the same, split, loose, or gain new agents}. 


\end{itemize}

The identification of these purely geometry based dynamical phenomena is 
complemented by analytical and simulation results on the Poisson model. The paper starts with
Section 2 where we give the notation and the point process background
used in the rest of the document. Section 3 presents the dynamical phenomena. It is one of the main contributions of the paper.

The main probabilistic question studied in the paper is about the spatial frequency
of the phenomena described above at any given step. 
In Section~\ref{sc:frequency_phenomena}, we show that in the Poisson case,
the frequencies in question admit representations in terms
of integral geometry formulas involving integration domains which are semi-algebraic sets
of the Euclidean space and integrands which are determined by the volume of certain unions of balls.
Section~\ref{numerical_methods} is a complement to 
Section~\ref{sc:frequency_phenomena} which discusses
numerical methods to evaluate the integral geometry
formulas alluded to above and spatial simulation methods for the complementary estimation of the frequencies in question.


The next question discussed in the paper is of dynamical system nature and concerns the limiting behavior of the 
Poisson follower point process of order $n$ and of the Poisson follower graph of order $n$,
as $n$ tends to infinity. These questions are discussed in Section~\ref{sc:asymptotic}.
We conjecture a classification of the points of the initial Poisson point
process into two categories: ultimate followers, which remain followers forever and whose
opinions only converge weakly to their limiting value, and utimate leaders
whose opinion converges in total variation to their limiting value, which is hence
reached in a finite number of steps.  In the same section, we study stable trees, i.e., trees that do not change their structure with time.  

\section{The Follower Model and its Background}\label{sc:notation}

\subsection{Overview of Point Process }

Let $|| \bullet ||$  denote the Euclidean norm on $\mathbb{R}^2$ and $B(A,r)$ the open ball of radius $r$ and center $A$. 

Let $S$ be the $\mathbb{R}^2$ space equipped with its Borel $\sigma$-
algebra $\mathcal{B}$ and Lebesgue measure $\nu$. A point process $\Phi$ on $S$ is a random locally
finite subset of $\Lambda$. One can also view $\Phi$ as a random counting measure on $\Lambda$, having the form $\Phi=\sum_{i\in \mathbb{N}}\delta_{X_i}$, where $\{X_i\}_{i\in \mathbb{N}}$ is a countable collection of points in $S$ with no accumulation points.

A point process is called \textit{simple} if $\Phi(\{x\})\leq 1$ for all $x \in S$. A point process whose distribution is invariant under translations is called \textit{stationary}. The \textit{intensity measure} of a point process $\Phi$ is the measure on $S$ defined by
\[\lambda(B)=\mathbb{E}[\Phi(B)], \, \, B\in \mathcal{B}(\Lambda).\]

\begin{definition}
For any counting measure $\mu=\sum_{i\in\mathbb{N}} \delta_{x_i}$ and $n\in \mathbb{N}$, its $n$-th power in the sense of products of measures is 
\[\mu^n=\sum_{(i_1,...,i_n)\in \mathbb{N}^n}\delta_{(x_{i_1},...,x_{i_n})}.\]
Define the $n$-th factorial moment measure as the following counting measure on $(\mathbb{R}^2)^n$.
\[\mu^{(n)}=\sum_{(i_1\neq...\neq i_n)}\delta_{(x_{i_1},...,x_{i_n})}.\]
\end{definition}

 \begin{definition}{Moment measures.}
For a point process $\Phi$ on $\mathbb{R}^2$, let $\Phi^n$ be the $n$-th power of $\Phi$ and $\Phi^{(n)}$ be the $n$-th factorial moment of $\Phi$. We call $M_{\Phi^n}=E[\Phi^n(B)]$ the $n$-th moment measure (the first moment measure is the mean measure) of $\Phi$ and $M_{\Phi^{(n)}}=E[\Phi^{(n)}(B)]$ the $n$-th factorial moment measure.
\end{definition}

Moment measures provide important average structural properties of the process, such as level of clustering or repulsion. 

\begin{definition}(Poisson point process)
Let $\Lambda$ be a locally finite measure on l.c.s.h. space $\mathbb{G}$. A point process $\Phi$ is said to be Poisson with intensity measure $\Lambda$ if for all pairwise disjoint sets $B_1, ..., B_j \in \mathcal{B}(\mathbb{G})$, the random variables $\Phi(B_1),..., \Phi(B_j)$ are independent Poisson random variables with respective means $\Lambda(B_1),..., \Lambda(B_j)$; i.e. $\forall m_1,... ,m_j \in \mathbb{N}$,  
\begin{equation}
\mathbb{P}(\Phi(B_1)=m_1,..., \Phi(B_j)=m_j)=\prod_{i=1}^j \frac{\Lambda(B_i)^{m_i}}{m_i!}e^{-\Lambda(B_i)}.
\end{equation} 
\end{definition}

\begin{definition}(Homogeneous Poisson point process on $\mathbb{R}^2$)
If $\Phi$ is a Poisson point process on $\mathbb{R}^2$ with intensity measure $\Lambda(dx)=\lambda \times dx$ where $\lambda \in \mathbb{R}^{*}_{+}$ and $dx$ denotes the Lebesgue measure, then $\Phi$ is called a homogeneous Poisson point process of intensity $\lambda$.  
\end{definition}


Throughout the paper, we denote by $\Phi=\sum_{i\in \mathbb{N}}\delta_{x_i}$ the homogeneous Poisson point process on $\mathbb{R}^2$ with intensity $\lambda$, which serves as initial condition to the dynamics. For all $n\geq 0$, the Poisson Follower point process of order $n$ will be denoted by $\Phi_n$. Note that $\Phi=\Phi_0$. Let $\mathbb{R}^{*}=\mathbb{R}\setminus \{0\}$.

For two points $x, y \in \Phi$, we denote by $B(x\rightarrow y)$ the open ball with center $x$ and radius $d(x,y)$, which is the distance between $x$ and $y$. To indicate the distance between two points $x$ and $y$ at the $n^{th}$ iteration, we will write $d(x,y, \Phi_n)$.

\paragraph{Palm Theory}
Informally, the Palm measures of a point process $\Phi$ at a point $x\in \Lambda$ is the probability measure of $\Phi$ conditioned on having a point at location $x$. For a more detailed discussion on the matter, see \cite{baccelli_nova_knjiga,kallenberg}.

\subsection{Notation for the Follower Dynamics}
Consider a counting measure on  $\mathbb{R}^2$. Take two points $A$ and $B$ in this counting measure.
If the closest point to $A$ is $B$, we say that $A$ \textit{follows} $B$. Other equivalent statements are,
"$A$ is a \textit{follower} of $B$", or "$B$ is a \textit{leader} of $A$", and "$B$ is \textit{followed} by $A$", etc.
We will also use notation $A \rightarrow B$ to mean that $A$ follows $B$. If $B$ follows $C$, and $A$ follows $B$, 
we call $C$ a \textit{leader of order two} of $A$. 
We call \textit{leader pair} a pair of agents that follow each other. In a homogeneous Poisson point process, each point A has a.s. a unique leader and there are a.s. no other cycles involved than leader pairs \cite{nearest_neigh}. 

Because of scale invariance of the Poisson point process, we will assume without loss of generality 
that $\lambda=1$ in subsequent sections.

\section{Phenomena Observed in the Follower Dynamics}\label{definitions}

In this section, we give some basic definitions to be used throughout the paper.
We also identify and set the terminology for the phenomena of interest
and give some illustrations.

\subsection{Agent Events}
Here is a list of situations
that can happen to an agent when applying the dynamics at any step. 

\paragraph{Leader swap of an agent}
The "out" edge of this agent is either kept  (\textit{leader keep}) or
changed ($\textit{leader swap}$) by the dynamics. Notice that these two situations
are mutually exclusive since, by construction, each point gets only one leader at any
given time. For the examples in Figure \ref{fig:ex_follower_loss}, Agent 3 swaps
its leader from 1 to 2, as a result of the dynamics.

\begin{figure}
\begin{minipage}{.5\textwidth}

\centering
  \includegraphics[width=2in]{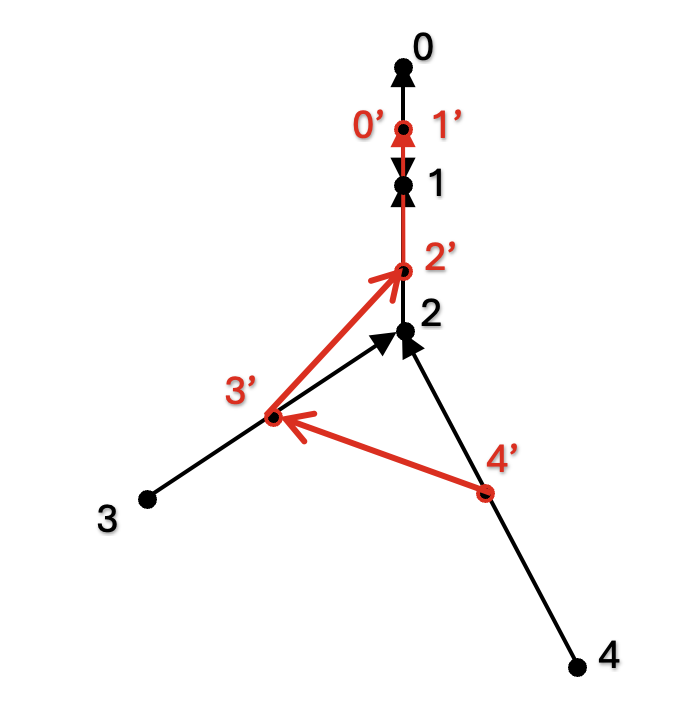}
\end{minipage}
\begin{minipage}{.5\textwidth}
\centering
  \includegraphics[width=2in]{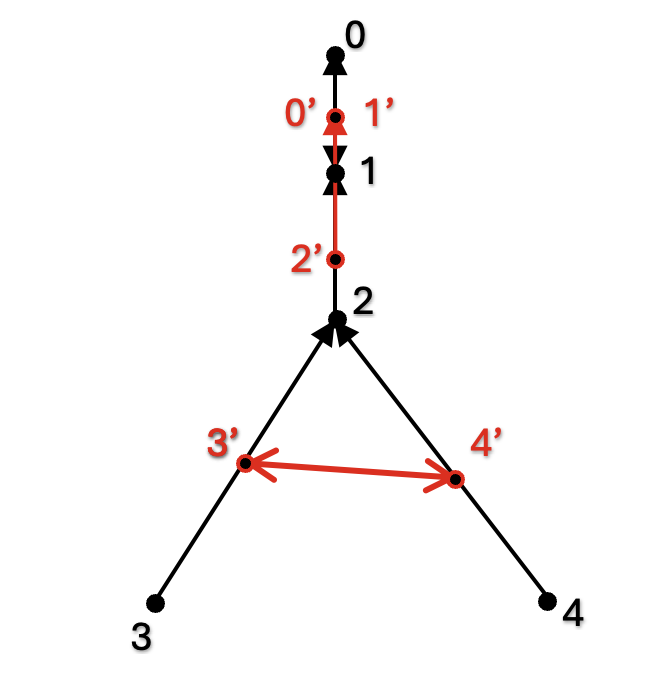} 
  \label{fig:ex_fusion}
  \end{minipage}
  \caption{The Step 0 positions are shown in black. Step 1 positions are in red, with the new
	positions denoted with the prime '. Note that 0 and 1 become 1' in both images.
	\textbf{ Left:} Example of a Follower Loss/Follower Gain. \textbf{Right:} 
	Example of formation of a leader pair of order 1. Same convention 
	as in the left image. In both cases 4 experiences a leader swap whereas 2 experiences a leader keep.}
  \label{fig:ex_follower_loss}
\end{figure}

\paragraph{Follower gain, loss or keep of an agent} 
Concerning the set of "in" edges of the agent, one can have a \textit{follower loss},
a \textit{follower gain} or a \textit{follower keep}. The last case is that where
the set of followers is the same before and after the considered step of the dynamics.
Notice that 
a loss and a gain can happen at the same time.
In the left of Figure \ref{fig:ex_follower_loss}, Agent 1 lost its follower,
Agent 3; Agent 2 gained a follower, namely Agent 3. 

\paragraph{Leader pair} We call initial leader pairs, agents involved in a leader
pair of order 0 (i.e. at step 0). New leader pairs, forming at step 1. will be called 
leader pairs of order 1, etc.
 
\paragraph{Leader pair formation by an agent} This situation is that where an agent 
which is not in a leader pair becomes part of a leader pair at the next step 
of the dynamics. An example of this is
shown in the right Figure \ref{fig:ex_follower_loss}. 

\begin{figure}
\begin{minipage}{.4\textwidth}
\centering
  \includegraphics[width=2in]{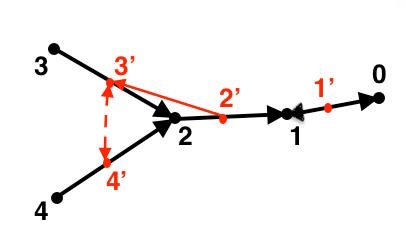}
  \caption{Example of follower inversion. Again 0 and 1 merge to become 1'. The positions at the step 1 are shown in red, with the new positions denoted with the prime, like for example $1'$. }
  \label{fig:ex_follower_inversion}
  \end{minipage}
\begin{minipage}{.7\textwidth}
\centering
  \includegraphics[width=2.5 in]{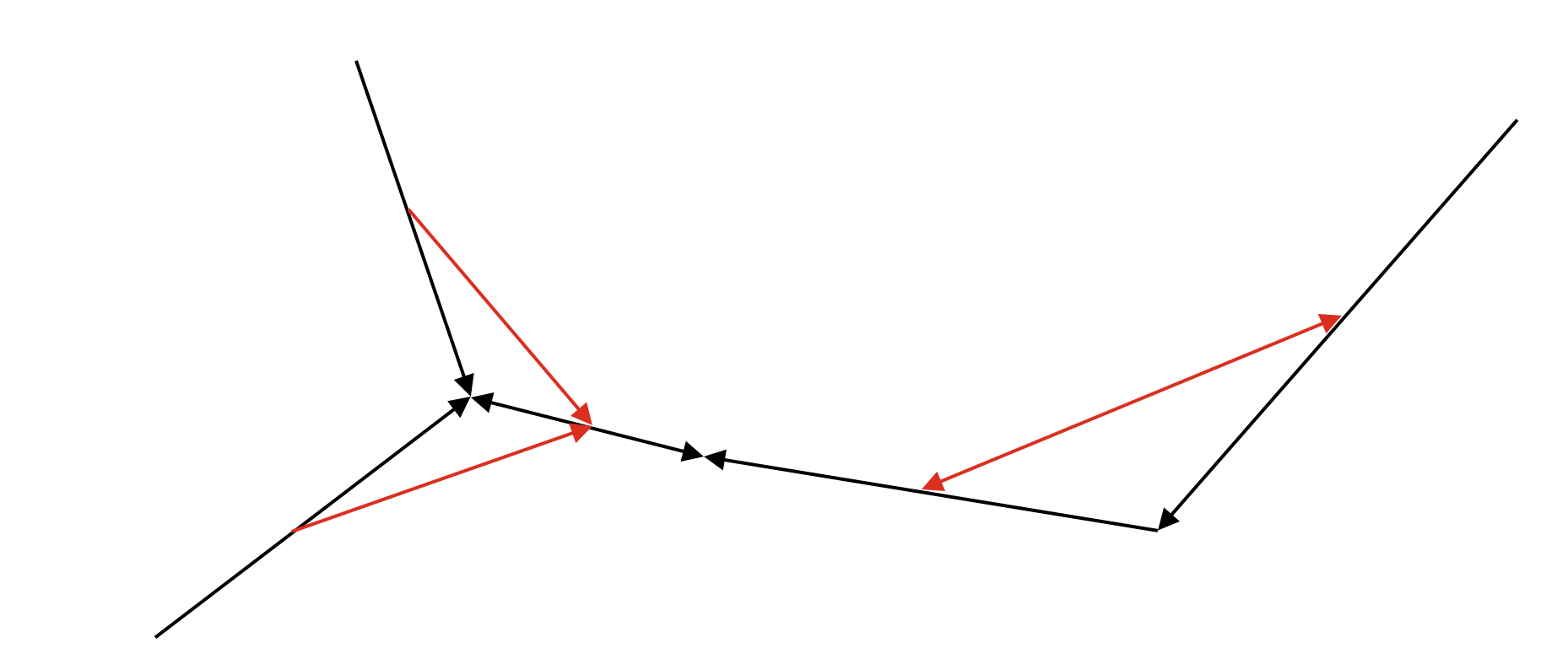} 
  \captionof{figure}{ Example of party fission}
  \label{fig:ex_step1}
  \end{minipage} 
\end{figure}

\paragraph{Follower inversion for an agent} Another special case is that where, after one step,
the leader of an agent becomes its follower. We call such a situation a \textit{follower inversion}.
We do not count here situations where the leader and follower become a leader pair at step 1
as a follower inversion. An example of follower inversion is shown in Figure \ref{fig:ex_follower_inversion}. Agent 3 was initially following Agent 2, but at step 1,
Agent 2 follows Agent 3, whereas Agents 3 and 4 form a leader pair.

\subsection{Parties}

\paragraph{Forward and Backward Sets} We denote by $\mbox{For}(x,\Phi)$ the leader
set of all orders of $x$ in $\Phi$, and we will call it the \textit{forward set of $x$}.
$\mbox{Back}(x,\Phi)$ denotes the follower set of all orders of $x$ in $\Phi$, and we call
it the \textit{backward set of $x$}. For each agent, there is a forward and a backward set
at each step of the dynamics. See Figure \ref{fig:forward_backwardsets} for these sets at step 0. 
\begin{figure}[h!]
\centering
  \includegraphics[width=2.5in]{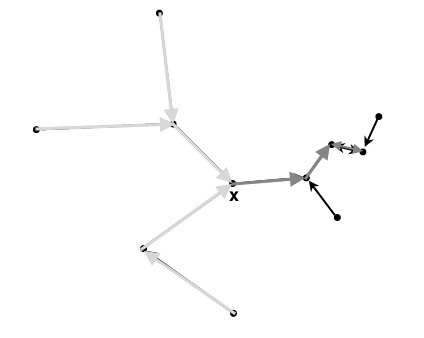}
  \caption{Representation of the forward and backward sets of an agent. The forward set of $x$
	is connected to $x$ with dark gray arrows while the backward set of $x$ is with light gray ones.
	The agents connected by black arrows are in the party of $x$ but neither in the backward
	set nor in the forward set of $x$.}
  \label{fig:forward_backwardsets}
\end{figure}

\paragraph{Parties} 
Roughly, we call \textit{party} any connected component of the follower graph at any step.
Note that two points $x$ and $y$ belong to the same party iff there exists a point $z$
(not necessarily different from $x$ or $y$), such that both $x$ and $y$ belong to the backward set of $z$,
i.e., $x, y \in \mbox{Back}(z,\Phi)$.

\paragraph{Parties at step $k$} are defined as connected components at step $k$. We conjecture that all parties are a.s. finite at all steps.
At any step, finite parties are characterized by their leader pair.
Since a leader pair cannot be dissolved,
the number of parties can only increase with the number of steps.
An example can be found in Figure \ref{fig:forward_backwardsets}. 
Since the initial agent distribution is Poisson at step 0, every party is almost surely
finite at step 0 \cite{nearest_neigh}. 

\paragraph{Party Gain/ Party Loss}
A given party (once it exists) can have fluctuations in its
cardinality.
A \textit{party gain} is an increase in the party's cardinality at a given step; this requires 
that at least
one agent of another party joins.
A \textit{party loss} is a decrease in its cardinality at a given step; this requires at least
one agent of this party to leave.

\paragraph{Party fission} The situation in which a new leader pair is created
within a party is called a \textit{party fission}. The initial party survives (its leader pair cannot
be dissolved). A new party is however formed, associated with this new leader pair.
Figure \ref{fig:ex_step1} gives an example of such a fission.

\paragraph{Party swap of an agent}
A given agent can belong to different parties (as defined above)
at different steps. For a given agent, a \textit{party swap} is a change
of party (namely a change of leader pair) at a given step.
An example of party swap is shown in right of Figure~\ref{fig:fussion_swap},
where agent $z$ and its followers are subject to a party swap. 
Note that an agent may experience a party swap even if it keeps its leader, but this leader
is itself subject to a party swap (as for agent $x$ in this example).
Another example of party swap of an agent is shown in left of Figure ~\ref{fig:fussion_swap} where
Agent A is subject to a party swap.

\paragraph{Party restructuring} By \textit{party restructuring}, we mean a situation where
the dynamics leads to a change within the directed graph describing the party, 
but to no change in the set of agents contained in the party, like the situation illustrated
in left of Figure \ref{fig:ex_follower_loss}. 

\paragraph{Party stability} A party is stable if when applying the dynamics, the set of agents
involved is unchanged and the structure of the directed graph describing the party is unchanged.

\paragraph{Indefinitely stable party} A party is indefinitely stable if it is stable
at any future time step. Stable parties exist (e.g. any linear directed path ending
with a leader pair) and are discussed in Section \ref{stable_chain}.

\begin{figure}[h!]
\centering
\begin{minipage}{.24\textwidth}
  \includegraphics[width=2.8in]{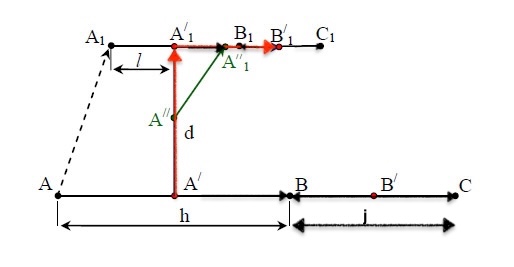}
  \end{minipage}
  \begin{minipage}{.75\textwidth}
  \centering
  \includegraphics[width=2.in]{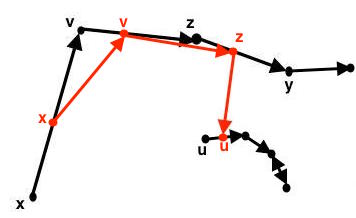}
  \end{minipage}
  \caption{\textbf{Left:} Example of a 4 body swap. A',B', $A_1'$ and $B_1'$ are the positions
	of agents in the next time step. Positions in the second time step are denoted by $A"$
	and $A_1"$ and connections are shown in green. \textbf{Right:} Example of a party swap.
	Initial positions are shown in black and step 1 is in red. Agent $z$ swaps leaders and
	parties. Together with $z$, agents $x$ and $v$ also experience a party swap.}
  \label{fig:fussion_swap}
\end{figure}

\paragraph{4 body party swap} 
A \textit{4 body party swap} is an agent party swap that involves 4 different agents:
$ A,B,A_1,$ and $B_1$, with, at step $k$, $A$ following $B$ and belonging to a party, 
and $A_1$ following $B_1$ and belonging to another party.
Then, at step $k+1$, $A$ follows $A_1$ which follows $B_1$, and
neither $B_1$ nor $B$ swap party. Hence $A$ is subject to a party swap.
For the example of the left of Figure~\ref{fig:fussion_swap}, 
for this party swap of $A$ to take place,
the following inequalities must hold:
$h^2<(\frac{h}{2}-l)^2+d^2$ ($A$ follows $B$ at step $k$), 
$d<\frac{h+j} 2$ ($A$ follows $A_1$ at step $k+1$). It is easy to check 
that there are solutions to these inequalities. In this figure, $(B,C)$
forms a leader pair as well as $(B_1,C_1)$, so that this scenario is a party
swap for $A$.

\begin{figure}[h!]
\centering
\begin{minipage}{.24\textwidth}
  \includegraphics[width=2.1in]{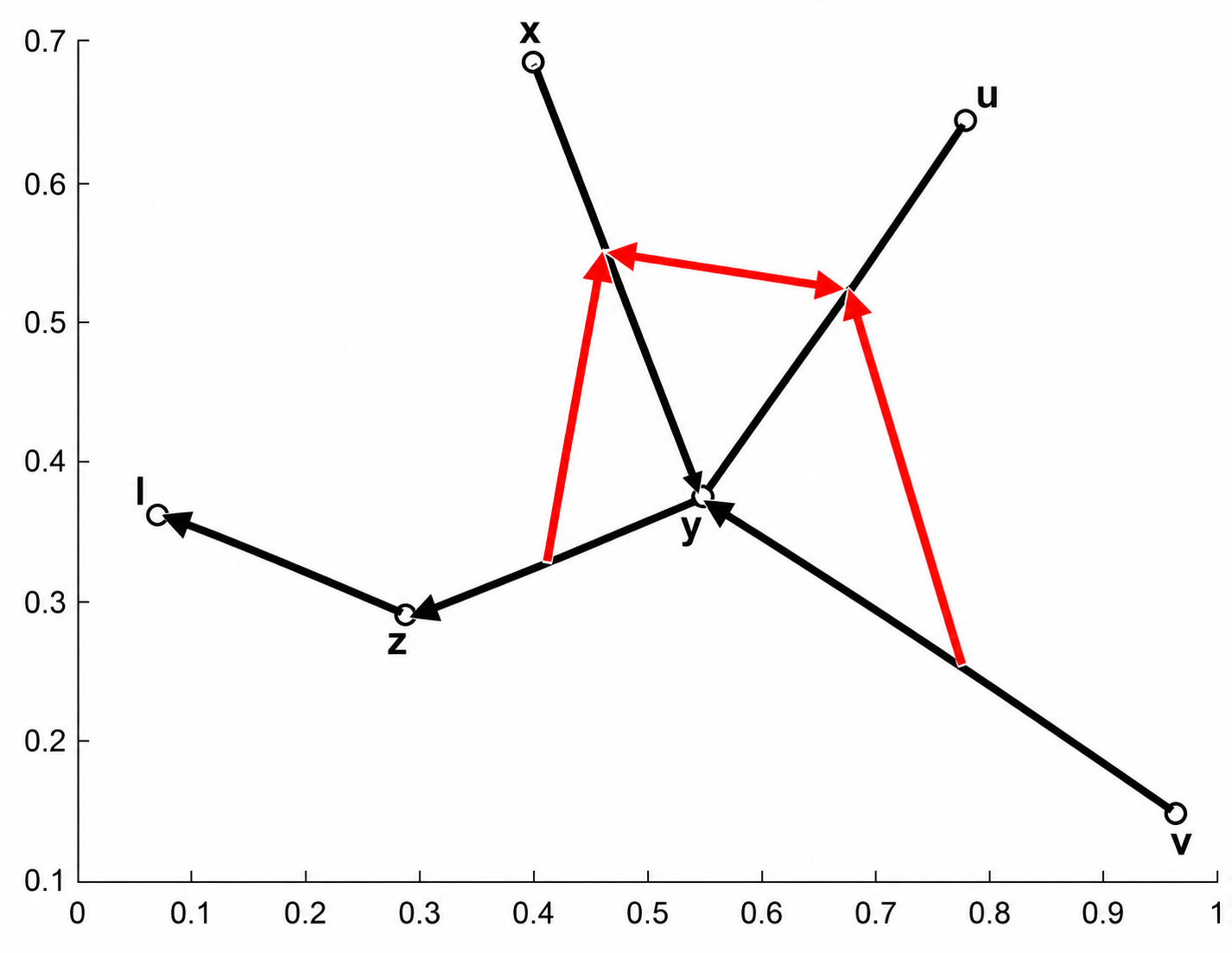}
  \end{minipage}
  \begin{minipage}{.75\textwidth}
  \centering
  \includegraphics[width=2.in]{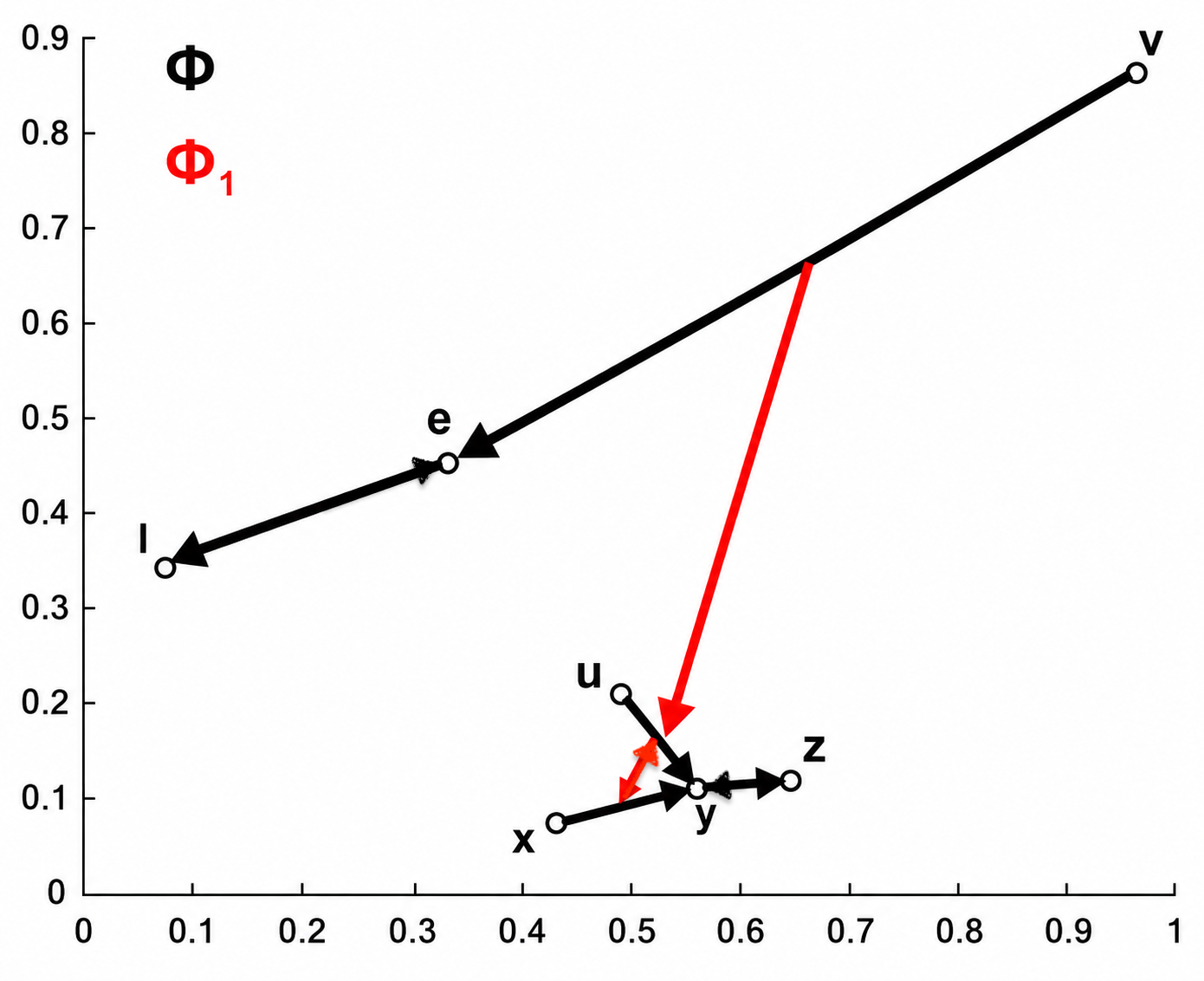}
  \end{minipage}
  \caption{Step 0 positions and connections are shown in black. Step 1 connections are shown in \textcolor{red}{red}. \textbf{Left:}  At step 0, $x$ follows $y$ whereas  at step 1,
  $y$ follows $x$. We hence have a follower inversion. In addition, $x$ and $u$ become a leader pair of order 1. \textbf{Right:} Agent $v$ swaps parties and agents $u$ and $x$ form a leader pair of order 1.}\label{fig:examples_different_phenomena}
\end{figure}

\paragraph{Remark} Under the Follower Dynamics,  in the Poisson case, we observe all the phenomena described in this section, and it is possible that after one step several events happen at the same time on a given set of agents. For example, at the agent level, in the left Figure \ref{fig:examples_different_phenomena}, we observe both a follower inversion and the creation of a leader pair of order 1. 
At the agent level, in the right Figure \ref{fig:examples_different_phenomena}, we observe both a party swap and the formation of a leader pair of order 1. The right of Figure \ref{fig:examples_different_phenomena} is also an example of a simultaneous party restructuring and party swap.

\section{Frequency of the Phenomena}\label{sc:frequency_phenomena}

In this Section, we develop a systematic method to calculate or estimate exact values or bounds on the spatial frequencies of related events using integral geometry. We demonstrate that this involves evaluating integrals of specific exponential functions over semi-algebraic sets. Semi-algebraic sets are subsets of $\mathbb{R}^2$, which are finite unions of solutions to polynomial equations and inequalities with coefficients in $\mathbb{R}$ \cite{mohab}. This approach applies to each of the phenomena listed above.


\subsection{Integral Geometry Estimates of Densities of Leader Pairs}\label{integral_geom_leader_pairs}

In this subsection, we give an integral geometry representation of leader pairs and then give numerical estimates. We illustrate the method of calculating the densities by showing a step by step calculation of the density of leader pairs in the initial configuration, i.e., the density of order 0.  One can calculate frequencies of other configurations in the same manner.  Then, we calculate an upper bound and an exact integral geometry formula
for the density of leaders of order 1, type 1 or \textit{density of order one type 1}. This is a known result \cite{haenggi}, but we derive it here in a new way that can be extended to other phenomena.  We also show how to calculate the density of leader pairs of order one.
Again, by leader pairs, we mean a nearest neighbor pair. 

\subsubsection{Order Zero}

Consider the point process $N^{(2)}$ consisting of pairs of points of $\Phi$ which are  mutually closest points. Namely 
 \[N^{(2)}=\sum_{i\neq j \in \mathbb{N}}\delta_{x_i,x_j}1_{\Phi(B(x_i \rightarrow x_j))=1},\\ 1_{\Phi(B(x_j \rightarrow x_i))=1 },\]
where $\Phi(B(x_j \rightarrow x_i))$ denotes the number of points in the open ball centered at $x_j$ with the radius $d(x_j,x_i)$.

The mean measure of $N^{(2)}$ is given by, for $A\subset \mathbb{R}^2\times \mathbb{R}^2$, 
\begin{equation}
\begin{split}
\mathbb{E}[N^{(2)}(A)]=\mathbb{E}[\int_{\mathbb{R}^2\times \mathbb{R}^2}1_{x,y\in A}1_{x\neq y}  1_{\Phi(B(x \rightarrow y))=1}
1_{\Phi(B(y \rightarrow x))=1 }\Phi^{(2)}({\rm d}x\times {\rm d}y)],
\end{split}
\end{equation}
where $\Phi^{(2)}$ is the Poisson factorial moment measure of order 2 (\cite{baccelli_nova_knjiga} Section 3.3.2).
By the higher order Campbell-Little-Mecke formula \cite{baccelli_blas},

\begin{equation}
\begin{split}
 \mathbb{E}[N^{(2)}(A)]=\int_{\mathbb{R}^2\times\mathbb{R}^2}1_{x,y\in A}\mathbb{E}^{x,y}[ 1_{\Phi(B(x\rightarrow y))=1} 1_{\Phi(B(y \rightarrow x))=1}]\lambda^{(2)}({\rm d}x {\rm d}y), 
\end{split}
\end{equation}
where $\mathbb{E}^{x,y}$ is the two point Palm expectation and $\lambda^{(2)}$ is the factorial Poisson moment measure of order 2 (\cite{baccelli_nova_knjiga} Section 3.3.2). Recall that for a Poisson point process, the $n^{th}$ factorial moment measure equals the $n^{th}$ power of the intensity measure. We will be using this over and over again in the calculations that follow.

Now by Slivnyak's theorem \cite{baccelli_blas}
\begin{equation}
 \mathbb{E}[N^{(2)}(A)]
= \int 1_{x,y\in A}
\mathbb{E}[1_{\Phi'(B(x\to y))=1} \, 1_{\Phi'(B(y\to x))=1}]
\lambda^2 {\rm d}x {\rm d}y
\end{equation}
where $\Phi'=\Phi + \delta_x+\delta_y$. Observe that $B(x\to y)$ is the open ball centered at $x$ with radius
$d(x,y)$. Hence $x\in B(x\to y)$ whereas $y\notin B(x\to y)$.
Similarly, $y\in B(y\to x)$ whereas $x\notin B(y\to x)$.
Therefore, under the Palm version $\Phi'=\Phi+\delta_x+\delta_y$,
the event $\{\Phi'(B(x\to y))=1\}$ means that the only point of
$\Phi'$ contained in $B(x\to y)$ is the deterministic Palm point $x$.
Equivalently, the underlying Poisson process $\Phi$ has no points in
$B(x\to y)$. The same argument applies to $B(y\to x)$.
Using the change of variables $(x,y)\rightarrow (x,u)$ with $u=y-x$ and taking $A=C\times \mathbb{R}^2$, with $C \in \mathcal{B}(\mathbb{R}^2)$, we get
\begin{equation}
\mathbb{E}[N^{(2)}(C\times \mathbb{R}^2)]=\int_{\mathbb{R}^2}\int_{\mathbb{R}^2}1_{x \in C}1_{u\in \mathbb{R}^2} \mathbb{E}[1_{\Phi'\circ \theta_x(B(o\rightarrow u))=1} 1_{\Phi'\circ \theta_x(B(u \rightarrow 0))=1}]\\
 \lambda^2 {\rm d}x {\rm d}u,
\end{equation}
 where $\theta_x$ is a shift operator. Since the translated Palm process contains the deterministic points
$0$ and $u$, and since $0\in B(o\to u)$ and $u\in B(u\to 0)$,
the conditions
$\Phi'\circ\theta_x(B(o\to u))=1$ and
$\Phi'\circ\theta_x(B(u\to 0))=1$
are equivalent to the absence of further Poisson points in the
corresponding balls, namely
$\Phi(B(o\to u))=0$ and $\Phi(B(u\to 0))=0$ \cite{baccelli_blas}.
The expectation is the probability for point $u$ to be the closest to $0$ and for $0$ to be closest to $u$. Then
\begin{equation}
\mathbb{E}[N^{(2)}(C\times \mathbb{R}^2)]=|C|\int_{\mathbb{R}^2} 1_{u \in \mathbb{R}^2} \mathbb{E}[1_{\Phi(B(o\rightarrow u))=0} 1_{\Phi(B(u \rightarrow 0))=0}]\\
 \lambda^2{\rm d}u.
\end{equation}
Because of symmetry, and after switching to polar coordinates, we get:
\begin{equation}
\mathbb{E}[N^{(2)}(C\times \mathbb{R}^2)]=|C|\int_{\theta\in [0, 2\pi]}\int_{0}^\infty v e^{-\lambda v^2\pi}e^{-\lambda v^2(\pi/3+\sqrt{3}/2)}
 \lambda^2{\rm d}v. 
\end{equation}
Hence
\begin{equation}
\mathbb{E}[N^{(2)}(C\times \mathbb{R}^2)]=|C| \int_0^\infty 2\pi v e^{-\lambda v^2\pi}e^{-\lambda v^2(\pi/3+\sqrt{3}/2)}
 \lambda^2{\rm d}v. 
\end{equation}
To evaluate the integral, we use the change of variables $v\rightarrow r$ with $r=\pi v^2$, and recall that $\lambda=1$. Then we get 
\[\mathbb{E}[N^{(2)}(C\times \mathbb{R}^2)])=|C|\int_0^\infty  e^{-r(\frac{\pi+(\pi/3+\sqrt{3}/2)}{\pi})}{\rm d}r=|C|\cdot \frac{\pi}{\pi+\pi/3+\sqrt{3}/2}\approx 0.62|C|.\]

So the density of the ultimate leader pairs of order 0, i.e. points that after one step stay together forever, is $0.62$. Note that we are double counting the points since they are two points at one spot.

\subsubsection{Density of Leader Pairs of Order One}\label{density_order1}

In this subsection, we continue with similar calculations to determine the intensity of certain leader pairs of order $1$. First, we provide an upper bound on this density and then explain how to find the exact density. Each calculation requires the positions of at least $4$ points to know the positions at step 1, see Figure \ref{fig:ex_2_swap}. The first situation involves two followers of one point that become a leader pair at step 1. An instance is given on the left of Figure \ref{fig:ex_2_swap}. The second involves cases where a leader and a follower become a leader pair at the next step. An instance can be found on the right of Figure \ref{fig:ex_2_swap}. Densities of both types are computed similarly, so we focus on the detailed calculation of type 1 only. 
\begin{figure}
\begin{minipage}{.5 \textwidth}
  \centering
  \includegraphics[width=.6\linewidth]{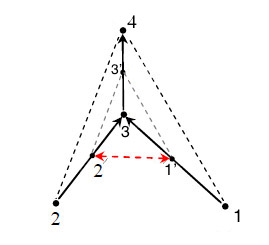}
\end{minipage}
\begin{minipage}{.5\textwidth}
\centering
  \includegraphics[width=2.4in]{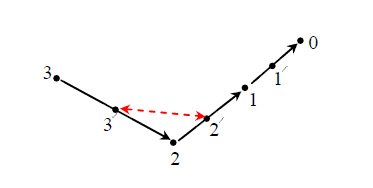}
\end{minipage}
  \caption{\textbf{Left:} Four points and their positions at step 1. Example of leader pair of order 1, type 1: two followers of one point become a leader pair at step 1. \textbf{Right}: Example of leader pair of order 1, type 2. A point and its leader become a leader pair at step 1.}
  \label{fig:ex_2_swap}
\end{figure}

\paragraph{Type 1}  As mentioned above, leader pairs of order 1, type 1 are formed from the configurations where two step 0 followers of one agent become a leader pair at step 1. We first give an upper bound on the frequency of type 1 and then we analyze the exact frequency.

Let $\mathcal{D}\in {\mathcal{B}}^4$ be the set of all distinct points
$z_1$, $z_2$, $z_3$, and $z_4$ that satisfy the conditions (\ref{cond12})-(\ref{cond15}) defined below:

\begin{equation}\label{cond12}
d(z_1,z_3)<d(z_1,z_2), \, d(z_1,z_3)<d(z_1,z_4),
\end{equation}
which are the conditions needed for $z_1$ to follow $z_3$, in the absence of other points than these four points;
\begin{equation}\label{cond13}
d(z_2,z_3)<d(z_2,z_1), \, d(z_2,z_3)<d(z_2,z_4),
\end{equation}
which are the conditions needed for $z_2$ to follow $z_3$ as well, also in the absence of other points. Finally
\begin{equation}\label{cond14}
d(z_3,z_4)<d(z_3,z_1), \, d(z_3,z_4)<d(z_3,z_2),
\end{equation}
which are the conditions needed for $z_3$ to follow $z_4$ under the same conditions.
The phenomenon we are interested in is that where, in the above configuration,
\begin{equation}\label{cond15}
d(z_1',z_2')<d(z_1',z_3'),\, d(z_2',z_1')<d(z_2',z_3'),
\end{equation}
where $z_1'=\frac{z_1+z_3}{2}$, $z_2'=\frac{z_2+z_3}{2}$, and $z_3'=\frac{z_3+z_4}{2}$. That is $z_1'$ and $z_2'$ are mutual closest neighbors in the absence of other points. Note that $\mathcal{D}$ is a semi-algebraic set \cite{mohab}.

For the points $(z_1, ..., z_4)$ to be involved in the formation of a leader pair of type 1, it is necessary but not sufficient that $(z_1,\ldots,z_4)$ belong to ${\cal D}$. 
For this to happen, in addition, it must be that there are no other points of the Poisson P.P. that change the facts
that both $z_1$ and $z_2$ follow $z_3$, $z_3$ follows $z_4$, and $z_1'$ and $z_2'$ are mutually nearest neighbors. 

Let $\Phi_{1,2,3,4}=\delta_{z_1}+\delta_{z_2}+\delta_{z_3}+\delta_{z_4}$ denote the point process $\Phi$ restricted to the set $z_1,z_2,z_3$, and $z_4$. 
Let $N_1^{(4)}$ be the point process of quadruples of points of the Poisson P.P. $\Phi$ that belong to ${\cal D}$,
and are such that the following event $M_1$ holds: in $\Phi$, both $z_1$ and $z_2$ follow $z_3$ and $z_3$ follows $z_4$.

In a first step, we evaluate the spatial frequency $\beta^1$ of the event $M_1$, which is an upper bound on the frequency of leader pairs of order 1, type 1.

For $A\in \mathcal{B}^4$, the mean measure of $N_1^{(4)}$ is given by,
\begin{align*}
&\mathbb{E}[N_1^{(4)}(A )]  =  \mathbb{E}
[\sum_{z_1, z_2, z_3, z_4 \in \Phi}^{\neq}1_{z_1,z_2,z_3,z_4\in A \cap \mathcal{D}}
1_{\Phi(B(z_1 \rightarrow z_3))=1}
 1_{\Phi(B(z_2 \rightarrow z_3))=1 }1_{\Phi(B(z_3 \rightarrow z_4))=1 }],
\end{align*}
where $\Phi$ is the P.P.P. and $B(x\rightarrow y)$ is the open ball of center $x$ and
radius $|x-y|$.
In integral form, this is
\begin{align*}
&\mathbb{E}[N_1^{(4)}(A )]=\mathbb{E}[\int_{\mathbb{R}^2\times \mathbb{R}^2 \times \mathbb{R}^2 \times \mathbb{R}^2}1_{z_1,z_2,z_3,z_4\in A \cap \mathcal{D}}1_{\Phi(B(z_1 \rightarrow z_3))=1}1_{\Phi(B(z_2 \rightarrow z_3))=1 } \\ &
1_{\Phi(B(z_3 \rightarrow z_4))=1 } \Phi^{(4)}({\rm d}z_1\times {\rm d}z_2 \times{\rm d}z_3 \times {\rm d}z_4) ].
\end{align*}
By the higher order Campbell-Little-Mecke formula,
\begin{align*}
&\mathbb{E}[N_1^{(4)}(A )]=\int_{\mathbb{R}^2\times \mathbb{R}^2 \times \mathbb{R}^2 \times \mathbb{R}^2}1_{z_1,z_2,z_3,z_4\in A \cap \mathcal{D}}\\
& \mathbb{E}^{z_1,z_2,z_3,z_4}[1_{\Phi(B(z_1 \rightarrow z_3))=1}1_{\Phi(B(z_2 \rightarrow z_3))=1 } 1_{\Phi(B(z_3 \rightarrow z_4))=1 }]\lambda^{(4)}({\rm d}z_1{\rm d}z_2{\rm d}z_3{\rm d}z_4) ,
\end{align*}
where $\mathbb{E}^{z_1,z_2,z_3,z_4}$ is the Palm expectation of $\Phi$ at $z_1, ..., z_4$, and $\lambda^{(4)}$
is the Poisson factorial moment measure of order 4.
By Slivnyak's theorem,
\begin{equation*}
\begin{split}
\mathbb{E}[N^{(4)}_1(A )]=\int_{\mathbb{R}^2\times \mathbb{R}^2 \times \mathbb{R}^2 \times \mathbb{R}^2}1_{z_1,z_2,z_3,z_4\in A \cap \mathcal{D}}
\mathbb{E}[1_{\hat{\Phi}(B(z_1 \rightarrow z_3))=1}
1_{\hat{\Phi}(B(z_2 \rightarrow z_3))=1 } 1_{\hat{\Phi}(B(z_3 \rightarrow z_4))=1 }]
\lambda^4 {\rm d}z_1 {\rm d}z_2 {\rm d}z_3 {\rm d}z_4 ,
\end{split}
\end{equation*}
Note that $z_1\in B(z_1\to z_3)$, $z_2\in B(z_2\to z_3)$, and
$z_3\in B(z_3\to z_4)$ because all balls are open and centered at
their first argument. Consequently, under the Palm process
$\hat{\Phi}=\Phi+\delta_{z_1}+\delta_{z_2}+\delta_{z_3}+\delta_{z_4}$,
each of the above balls already contains exactly one deterministic Palm
point. Therefore the conditions
\[
\hat{\Phi}(B(z_1\to z_3))=1,\qquad
\hat{\Phi}(B(z_2\to z_3))=1,\qquad
\hat{\Phi}(B(z_3\to z_4))=1
\]
are equivalent to the statement that the underlying Poisson process
$\Phi$ contains no additional points in these balls. This yields the
usual Poisson void probability of the union of the three balls.
We can simplify the expression a bit more using stationarity of the Poisson point process. Take $A=\mathbb{R}^2\times \mathbb{R}^2\times C \times \mathbb{R}^2$ with $C\in \mathcal{B}(\mathbb{R}^2)$ a compact. Because we have a stationary point process,
using the change of variables,  $\tilde{z}_1=z_1-z_3$,
$\tilde{z}_2=z_2-z_3$, and $\tilde{z}_4=z_4-z_3$, we get
\begin{eqnarray}
\label{combined}
\mathbb{E}[N_1^{(4)}(\mathbb{R}^2\times \mathbb{R}^2\times C \times \mathbb{R}^2)]=
|C| \int_{{\mathbb{R}^2}}\int_{{\mathbb{R}^2}} \int_{{\mathbb{R}^2}} 
1_{\tilde{z}_1,\tilde{z}_2,\tilde{z}_4 \in \tilde{\cal D}} \nonumber
e^{-\lambda (\mathrm{Vol}(B(0\rightarrow \tilde{z}_4)\cup B(\tilde{z}_1\rightarrow 0)\cup B(\tilde{z}_2 \rightarrow 0))}
\nonumber
\lambda^4 {\rm d}\tilde{z}_1{\rm d}\tilde{z}_2{\rm d}\tilde{z}_4=|C|\beta_1,
\end{eqnarray}
with $\tilde{{\cal D}}=\{(z_1-z_3,\, z_2-z_3,\, 0,\, z_4-z_3): (z_1,\, z_2,\, z_3,\, z_4)\in {\cal D}\}$.

This is the announced integral over a semi-algebraic set. Note that this integral is twice the density of interest since we do not distinguish whether $d(z_1,z_3)< d(z_2,z_3)$ or the other way around. In order to evaluate the integral, we need to write a formula for the volume of the union of 3 balls. This is discussed in Section \ref{semi_algebraic_numerics}.

Now that we have all the elements, we have a formula to calculate exact value of the volume $|\mathbb{U}(z_1,z_2,z_3)|$ of the union of balls. For compactness, the integral of the frequency of type 1 can be written as:
\begin{equation}\label{combined}
\mathbb{E}[N_1^{(4)}(\mathbb{R}^2\times \mathbb{R}^2\times C \times \mathbb{R}^2)]=
|C| \int_{{\mathbb{R}^2}}\int_{{\mathbb{R}^2}} \int_{{\mathbb{R}^2}} 
\lambda^41_{\tilde{z}_1,\tilde{z}_2,\tilde{z}_4 \in {\cal D}_0} \nonumber
e^{-\lambda |\mathbb{U}(0, \tilde{z}_1,\tilde{z}_2,\tilde{z}_4)|}
\nonumber
 {\rm d}\tilde{z}_1{\rm d}\tilde{z}_2{\rm d}\tilde{z}_4:=|C|\beta^1,
\end{equation}
with ${\cal D}_0 ={\cal D} \cap \{z_3=0\}$.



%
%
%
We now discuss the exact calculation. In order to get the event of interest, one should in addition have $\Phi_1(B(z_1\to z_2))=1$
and $\Phi_1(B(z_2\to z_1))=1$, with $\Phi_1$ the point process at step 1. 
In other words, certain refinements of the last configuration should be removed from the counting.
We can order these refinements in the disjoint and exhaustive categories listed below
\begin{enumerate}
\item There is an extra point $x$ in $\Phi$ that follows $z_1$ in $\Phi$ and such that the distance from
$x'$ to $z_1'$ is less than that from $z_1'$ to $z'_2$. The case where there is an extra point $x$ in $\Phi$ that follows $z_2$ in $\Phi$ and such that the distance $d(x,z_2, \Phi_1)< d(z_1,z_2,\Phi_1)$ is analogous and is counted here as well. This is due to a fact that we do not distinguish whether $d(z_1,z_3)<d(z_2,z_3)$ or vice versa.

For evaluating the frequency of this event, we have to enrich ${\cal D}$ by adding $x$ and state that $x$ follows $z_1$ and that none of the points $z_1,z_2,z_3,z_4$ follows $x$, that is
\begin{equation}
\begin{split}
d(x,z_1)<d(x,z_2),\, d(x,z_1)<d(x,z_3), \,d(x,z_1)<d(x,z_4), \\
d(z_1,z_3)<d(z_1,x), \,d(z_2,z_3)<d(z_2,x),\, d(z_4,z_3)<d(z_4,x)
,d(z_3,z_4)<d(z_3,x).
\end{split}
\end{equation}

This adds 7 quadratic inequalities. 
Finally, the condition that $d(x,z_1, \Phi_1)< d(z_1,z_2,\Phi_1)$, gives one more quadratic inequality 
\begin{equation}
d(x',z_1')<d(z'_1,z'_2),
\end{equation}
where $x'=\frac{x+z_1}{2}$.
So the frequency of this refinement can also be reduced to the evaluation of an integral
over a semi-algebraic set ${\cal D}_1$, which is a refinement of ${\cal D}$ with one more variable
and 8 more quadratic inequalities, with the function to be integrated involving one more ball in the union.

For $A\in \mathcal{B}^5$, the mean measure of $N_{1,1}^{(5)}$, which is the point process of the 5-tuples of points satisfying the above conditions, is given by,
\begin{eqnarray*}
\mathbb{E}[N_{1,1}^{(5)}(A )] & = & \mathbb{E}
[\sum_{x,z_1, z_2, z_3, z_4 \in \Phi}^{\neq}1_{x,z_1,z_2,z_3,z_4\in A \cap {\cal D}_1}
1_{\Phi(B(z_1 \rightarrow z_3))=1}\\&& 1_{\Phi(B(z_2 \rightarrow z_3))=1 }
1_{\Phi(B(z_3 \rightarrow z_4))=1 }1_{\Phi(B(x \rightarrow z_1))=1}],
\end{eqnarray*}

In integral form, this is
\begin{equation*}
\begin{split}
\mathbb{E}[N_{1,1}^{(5)}(A )]=\mathbb{E}[\int_{\mathbb{R}^2\times \mathbb{R}^2 \times \mathbb{R}^2 \times \mathbb{R}^2\times \mathbb{R}^2}1_{x,z_1,z_2,z_3,z_4\in A \cap {\cal D}_1}1_{\Phi(B(z_1 \rightarrow z_3))=1}\\1_{\Phi(B(z_2 \rightarrow z_3))=1 } 1_{\Phi(B(z_3 \rightarrow z_4))=1 } 1_{\Phi(B(x \rightarrow z_1))=1}\Phi^{(5)}({\rm d}x\times {\rm d}z_1\times {\rm d}z_2 \times {\rm d}z_3 \times {\rm d}z_4) ].
\end{split}
\end{equation*}
By the higher order Campbell-Little-Mecke formula,

\begin{align*}
\mathbb{E}[N_{1,1}^{(5)}(A )] &=\int_{\mathbb{R}^2\times \mathbb{R}^2 \times \mathbb{R}^2 \times \mathbb{R}^2\times \mathbb{R}^2}1_{x, z_1,z_2,z_3,z_4\in A \cap {\cal D}_1}\\
& \mathbb{E}^{x,z_1,z_2,z_3,z_4}[1_{\Phi(B(z_1 \rightarrow z_3))=1}1_{\Phi(B(z_2 \rightarrow z_3))=1 } 1_{\Phi(B(z_3 \rightarrow z_4))=1 }
1_{\Phi(B(x \rightarrow z_1))=1}]\lambda^{(5)}({\rm d}x{\rm d}z_1{\rm d}z_2{\rm d}z_3{\rm d}z_4) ,
\end{align*}

where $\mathbb{E}^{x, z_1,z_2,z_3,z_4}$ is the Palm expectation of $\Phi$ at $x,z_1,...z_4$, and $\lambda^{(5)}$
is the factorial Poisson moment measure of order 5.
By Slivnyak's theorem,
\begin{equation*}
\begin{split}
\mathbb{E}[N_{1,1}^{(5)}(A )]=\int_{\mathbb{R}^2\times \mathbb{R}^2 \times \mathbb{R}^2 \times \mathbb{R}^2\times \mathbb{R}^2}1_{x,z_1,z_2,z_3,z_4\in A \cap {\cal D}_1}
\mathbb{E}[1_{\hat{\Phi}(B(z_1 \rightarrow z_3))=1}\\
1_{\hat{\Phi}(B(z_2 \rightarrow z_3))=1 } 1_{\hat{\Phi}(B(z_3 \rightarrow z_4))=1 }1_{\hat{\Phi}(B(x \rightarrow z_1))=1}]
\lambda^5 {\rm d}x {\rm d}z_1 {\rm d}z_2 {\rm d}z_3 {\rm d}z_4 ,
\end{split}
\end{equation*}
where $\hat{\Phi}=\Phi+\delta_x+\delta_{z_1}+\delta_{z_2}+\delta_{z_3}+\delta_{z_4}$ is a Poisson P.P with intensity $\lambda$.
Take $A=\mathbb{R}^2\times \mathbb{R}^2\times \mathbb{R}^2\times C \times \mathbb{R}^2$ with $C$ a compact. 
Because we have a stationary point process, 
using the change of variables $\tilde{z}_1=z_1-z_3$, $\tilde{x}=x-z_3$,
$\tilde{z}_2=z_2-z_3$, and $\tilde{z}_4=z_4-z_3$, we get
\begin{eqnarray}
\label{total_type1_case1}
\mathbb{E}[N_{1,1}^{(5)}(\mathbb{R}^2\times \mathbb{R}^2\times \mathbb{R}^2\times C \times \mathbb{R}^2)]=
|C| \int_{{\mathbb{R}^2}}\int_{{\mathbb{R}^2}}\int_{{\mathbb{R}^2}} \int_{{\mathbb{R}^2}} 
1_{\tilde{x},\tilde{z}_1,\tilde{z}_2,\tilde{z}_4 \in \tilde{{\cal D}_1}} \nonumber
\\
e^{-\lambda (\mathrm{Vol}(B(0\rightarrow \tilde{z}_4)\cup B(\tilde{z}_1\rightarrow 0)\cup B(\tilde{z}_2 \rightarrow 0)\cup B(\tilde{x} \rightarrow \tilde{z}_1)))} 
\nonumber
\lambda^5 {\rm d}\tilde{x}{\rm d}\tilde{z}_1{\rm d}\tilde{z}_2{\rm d}\tilde{z}_4,
\end{eqnarray}
with $\tilde{{\cal D}_1} =\{(x-z_3,\, z_1-z_3,\, z_2-z_3,\, 0,\, z_4-z_3): (x, \, z_1,\, z_2,\, z_3,\, z_4)\in {\cal D}_1\}$.

Similar to the derivation for the union of 3 balls, one can calculate the volume of the union of 4 balls. 




\item There is an extra point $x$ in $\Phi$ that follows $z_3$ in $\Phi$ and such that $d(x,z_1, \Phi_1)< d(z_1,z_2,\Phi_1)$. The case where $d(x,z_2, \Phi_1)< d(z_1,z_2,\Phi_1)$ is symmetric.

For evaluating the frequency of this event, we have to enrich $\cal D$ by adding $x$ and state that $x$ follows $z_3$ and that none of the points $z_1,z_2,z_3, z_4$ follow $x$.
\begin{equation}
\begin{split}
d(x,z_3)<d(x,z_1),\, d(x,z_3)<d(x,z_2),\, d(x,z_3)<d(x,z_4), \\
d(z_1,z_3)<d(z_1,x), \,d(z_2,z_3)<d(z_2,x),\, d(z_4,z_3)<d(z_4,x),  d(z_3,z_4)<d(z_3,x).
\end{split}
\end{equation}

This adds 7 quadratic inequalities. 
Finally, the condition that $d(x,z_1, \Phi_1)< d(z_1,z_2,\Phi_1)$, gives one more quadratic inequality:
\begin{equation}
d(x',z_1')<d(z'_1,z'_2),
\end{equation}
where $x'=\frac{x+z_3}{2}$.
So the frequency of this refinement can be reduced to the evaluation of an integral
over a semi-algebraic set ${\cal D}_2$, which is a refinement of ${\cal D}$ with one more variable
and 8 more quadratic inequalities, with the function to be integrated involving one more ball in the union.

For $A\in \mathcal{B}^5$, the mean measure of $N_{1,2}^{(5)}$, which is the point process of the 5-tuples of points satisfying the conditions above, is given by,
\begin{eqnarray*}
\mathbb{E}[N_{1,2}^{(5)}(A )] & = & \mathbb{E}
[\sum_{x,z_1, z_2, z_3, z_4 \in \Phi}^{\neq}1_{x,z_1,z_2,z_3,z_4\in A \cap {\cal D}_2}
1_{\Phi(B(z_1 \rightarrow z_3))=1}\\&& 1_{\Phi(B(z_2 \rightarrow z_3))=1 }
1_{\Phi(B(z_3 \rightarrow z_4))=1 }1_{\Phi(B(x \rightarrow z_3))=1}].
\end{eqnarray*}

In integral form, this is
\begin{equation*}
\begin{split}
\mathbb{E}[N_{1,2}^{(5)}(A )]&=\mathbb{E}[\int_{\mathbb{R}^2\times \mathbb{R}^2 \times \mathbb{R}^2 \times \mathbb{R}^2\times \mathbb{R}^2}1_{x,z_1,z_2,z_3,z_4\in A \cap {\cal D}_2}1_{\Phi(B(z_1 \rightarrow z_3))=1}, \\& 1_{\Phi(B(z_2 \rightarrow z_3))=1 } 1_{\Phi(B(z_3 \rightarrow z_4))=1 } 1_{\Phi(B(x \rightarrow z_3))=1}\Phi^{(5)}({\rm d}x\times {\rm d}z_1\times {\rm d}z_2 \times {\rm d}z_3 \times {\rm d}z_4) ].
\end{split}
\end{equation*}
By the higher order Campbell-Little-Mecke formula, 
\begin{equation*}
\begin{split}
\mathbb{E}[N_{1,2}^{(5)}(A )]&=\int_{\mathbb{R}^2\times \mathbb{R}^2 \times \mathbb{R}^2 \times \mathbb{R}^2\times \mathbb{R}^2}1_{x, z_1,z_2,z_3,z_4\in A \cap {\cal D}_2} \mathbb{E}^{x,z_1,z_2,z_3,z_4}[1_{\Phi(B(z_1 \rightarrow z_3))=1}1_{\Phi(B(z_2 \rightarrow z_3))=1 }
\\ & 1_{\Phi(B(z_3 \rightarrow z_4))=1 }
1_{\Phi(B(x \rightarrow z_3))=1}]\lambda^{(5)}({\rm d}x{\rm d}z_1{\rm d}z_2{\rm d}z_3{\rm d}z_4) ,
\end{split}
\end{equation*}
where $\mathbb{E}^{x, z_1,z_2,z_3,z_4}$ is the Palm expectation of $\Phi$ at $x,z_1,...z_4$, and $\lambda^{(5)}$
is the factorial Poisson moment measure of order 5.
Following the analogous steps as in case one, stationarity and the change of variables $\tilde{z}_1=z_1-z_3$, $\tilde{x}=x-z_3$,
$\tilde{z}_2=z_2-z_3$, and $\tilde{z}_4=z_4-z_3$, we get
\begin{eqnarray}
\label{total_type1_case2}
\mathbb{E}[N_{1,2}^{(5)}(\mathbb{R}^2\times \mathbb{R}^2\times \mathbb{R}^2\times C \times \mathbb{R}^2)]=
|C| \int_{{\mathbb{R}^2}}\int_{{\mathbb{R}^2}}\int_{{\mathbb{R}^2}} \int_{{\mathbb{R}^2}} 
1_{\tilde{x},\tilde{z}_1,\tilde{z}_2,\tilde{z}_4 \in \tilde{{\cal D}_2}} \nonumber
\\
e^{-\lambda (\mathrm{Vol}(B(0\rightarrow \tilde{z}_4)\cup B(\tilde{z}_1\rightarrow 0)\cup B(\tilde{z}_2 \rightarrow 0)\cup B(\tilde{x} \rightarrow 0)))} 
\nonumber
\lambda^5 {\rm d}\tilde{x}{\rm d}\tilde{z}_1{\rm d}\tilde{z}_2{\rm d}\tilde{z}_4,
\end{eqnarray}
with $\tilde{{\cal D}_2} =\{(x-z_3,\, z_1-z_3,\, z_2-z_3,\, 0,\, z_4-z_3): (x, \, z_1,\, z_2,\, z_3,\, z_4)\in {\cal D}_2\}$.

Again, one can calculate the volume of the union of 4 balls.

\item There are two extra points $x$ and $y$, with $x\neq y$, such that $x$ follows $y$ (4 inequalities) and none of the points
$z_1,\ldots,z_4$ follow either $x$ or $y$ (6 inequalities). Thus we have
\begin{gather*}
d(x,y)<d(x,z_1), \,d(x,y)<d(x,z_2), \, d(x,y)< d(x,z_3), \, d(x,y)<d(x,z_4),\\
d(z_1,z_3)<d(z_1,x), \,d(z_1,z_3)<d(z_1,y),\\
d(z_2,z_3)<d(z_2,x),\, d(z_2,z_3)<d(z_2,y),\\
d(z_3,z_4)<d(z_3,x),\, d(z_3,z_4)<d(z_3,y).
\end{gather*}

To this, as above one should add the condition that $x'$ is closer from $z_1'$ than $z_1'$ to $z_2'$. 
WLOG take $x'=\frac{x+y}{2}$ to be closer to $z'_1$ than $z'_1$ to $z'_2$, i.e.,
\[d(z'_1,x')<d(z'_1,z'_2).\]
This amounts to 2 more variables and 11 more quadratic inequalities 
and one more empty ball conditions ($\Phi(B(x\to y))=1$).
So the frequency of this refinement can be reduced to the evaluation of an integral
over a semi-algebraic set ${\cal D}_3$, which is a refinement of ${\cal D}$ with two more variables
and 11 more quadratic inequalities, with the function to be integrated involving one more ball in the union.

For $A\in \mathcal{B}^6$, the mean measure of $N_{1,3}^{(6)}$, which is the point process of the 6-tuples of points satisfying the conditions above, is given by
\begin{eqnarray*}
\mathbb{E}[N_{1,3}^{(6)}(A )] & = & \mathbb{E}
[\sum_{x,y, z_1, z_2, z_3, z_4 \in \Phi}^{\neq}1_{x,y, z_1,z_2,z_3,z_4\in A \cap {\cal D}_3}
1_{\Phi(B(z_1 \rightarrow z_3))=1} \\&& 1_{\Phi(B(z_2 \rightarrow z_3))=1 }
1_{\Phi(B(z_3 \rightarrow z_4))=1 }1_{\Phi(B(x \rightarrow y))=1}],
\end{eqnarray*}

In integral form, this is
\begin{equation*}
\begin{split}
\mathbb{E}[N_{1,3}^{(6)}(A )]&=\mathbb{E}[\int_{\mathbb{R}^2\times \mathbb{R}^2\times \mathbb{R}^2 \times \mathbb{R}^2 \times \mathbb{R}^2\times \mathbb{R}^2}1_{x, y, z_1,z_2,z_3,z_4\in A \cap {\cal D}_3} 1_{\Phi(B(z_1 \rightarrow z_3))=1}, 1_{\Phi(B(z_2 \rightarrow z_3))=1 } 1_{\Phi(B(z_3 \rightarrow z_4))=1 }\\ & 1_{\Phi(B(x \rightarrow y))=1}\Phi^{(6)}({\rm d}x \times {\rm d}y \times {\rm d}z_1\times {\rm d}z_2 \times {\rm d}z_3 \times {\rm d}z_4) ].
\end{split}
\end{equation*}
Analogously to case one, the calculation boils down to having $A=\mathbb{R}^2\times \mathbb{R}^2\times \mathbb{R}^2\times \mathbb{R}^2\times C \times \mathbb{R}^2$ with $C$ a compact. 
Because we have a stationary point process, 
using the change of variables $\tilde{z}_1=z_1-z_3$, $\tilde{x}=x-z_3$, $\tilde{y}=y-z_3$
$\tilde{z}_2=z_2-z_3$, and $\tilde{z}_4=z_4-z_3$, we get
\begin{align*}
\label{total_type1_case3}
\mathbb{E}[N_{1,3}^{(6)}(\mathbb{R}^2\times \mathbb{R}^2\times \mathbb{R}^2\times \mathbb{R}^2\times C \times \mathbb{R}^2)]&=
|C| \int_{{\mathbb{R}^2}}\int_{{\mathbb{R}^2}}\int_{{\mathbb{R}^2}} \int_{{\mathbb{R}^2}} 
1_{\tilde{x},\tilde{z}_1,\tilde{z}_2,\tilde{z}_4 \in \tilde{{\cal D}_3}} \nonumber
\\&
e^{-\lambda (\mathrm{Vol}(B(0\rightarrow \tilde{z}_4)\cup B(\tilde{z}_1\rightarrow 0)\cup B(\tilde{z}_2 \rightarrow 0)\cup B(\tilde{x} \rightarrow \tilde{y})))} 
\nonumber
\lambda^6 d\tilde{x} {\rm d}\tilde{y} {\rm d}\tilde{z}_1{\rm d}\tilde{z}_2{\rm d}\tilde{z}_4,
\end{align*}
with 

\[\tilde{{\cal D}_3} =\{(x-z_3,\, y-z_3,\, z_1-z_3,\, z_2-z_3,0, z_4-z_3): (x, y, z_1,\, z_2,\, z_3,\, z_4)\in {\cal D}_3\}.\]

\end{enumerate}

Let $\beta_i$ the frequency of event $i=1,2,3$ in the last list.
Then the frequency of interest is
\begin{equation}\label{total_frequency_order1type1}
\frac{1}{2}(\beta^1-\sum_{i=1}^3 \beta_i).
\end{equation}

Note that the coefficient $\frac{1}{2}$ comes from a symmetry argument and to avoid double counting.
We use numerical methods to evaluate all the frequencies. This is discussed in the next section.

\section{Numerical Estimation Methods}\label{numerical_methods}

We use two different methods to estimate frequencies. The first is \textit{discrete event spatial simulation}, which consist in sampling a Poisson point process and exploiting the ergodic properties of the Poisson point process and related factors of such point processes \cite{kendall_mecke, baccelli_nova_knjiga}. This approach is grounded in the ergodic theory of point processes \cite{kendall_mecke}.
The second method involves the numerical evaluation of integrals based on semi-algebraic sets discussed above. We explain both methods in detail and provide estimated values for some of the frequencies mentioned earlier. For each, we first provide some theoretical background and then describe how we apply it for our evaluation. When appropriate, we include 95\% confidence intervals. Finally, we compare the results obtained from both methods.

\subsection{Discrete Event Spatial Simulation}\label{spatial_simulation}


The background about ergodicity discussed in this section comes from \cite{baccelli_nova_knjiga}.

\begin{definition}\cite{baccelli_nova_knjiga}
A stationary framework $(\Omega, \mathcal{A},\{\theta_t\}_{t\in \mathbb{R}^d},\mathbb{P})$ is said to be ergodic if
\[\lim_{a\rightarrow \infty} \frac{1}{(2a)^d}\int_{[-a,a]^d}\mathbb{P}(A_1 \cap \theta_{-x}A_2){\rm d}x=\mathbb{P}(A_1)\mathbb{P}(A_2), \, \, \forall A_1, A_2, \in \mathcal{A}.\]
It is said to be mixing if 
\[\lim_{|x|\rightarrow \infty} \mathbb{P}(A_1\cap \theta_{-x}A_2)=\mathbb{P}(A_1)\mathbb{P}(A_2), \, \, \forall A_1, A_2 \in \mathcal{A}.\]
\end{definition}

\begin{lemma}
For all $n$, the follower point process of order $n$ is a factor of a Poisson point process, and is hence ergodic and mixing.
\end{lemma}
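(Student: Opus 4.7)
The plan is to proceed by a simple induction on $n$, using the fact that a translation-equivariant measurable map applied to a mixing stationary point process yields a mixing stationary point process. The base case $\Phi_0 = \Phi$ is the homogeneous Poisson point process on $\mathbb{R}^2$, which is well known to be mixing (see, e.g., the standard reference \cite{baccelli_nova_knjiga}), so in particular it is a factor of itself.

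For the inductive step, I would make explicit the follower map $F$ sending a counting measure $\mu = \sum_i \delta_{y_i}$ (in general position) to $F(\mu) = \sum_i \delta_{(y_i + h_\mu(y_i))/2}$, where $h_\mu(y_i)$ denotes the nearest neighbor of $y_i$ in $\mu$. Three properties need to be checked: (i) $F$ is defined almost surely, which follows from the fact that in a Poisson P.P.\ all interpoint distances are a.s.\ distinct, and this property is preserved under $F$ since factors of a.s.\ events remain a.s.\ events; (ii) $F$ is measurable with respect to the canonical $\sigma$-algebra on the space of locally finite counting measures, which follows from the measurability of the nearest-neighbor map and of the midpoint operation; (iii) $F$ commutes with the shift, i.e., $F(\theta_t \mu) = \theta_t F(\mu)$, which is immediate from the translation invariance of the Euclidean distance and the midpoint. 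Iterating, $\Phi_n = F^n(\Phi_0)$ is a measurable shift-equivariant function of $\Phi_0$, hence a factor.

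Mixing then transfers from $\Phi_0$ to $\Phi_n$ by a standard argument: for events $A_1, A_2$ in the canonical $\sigma$-algebra of $\Phi_n$, the preimages $B_i := (F^n)^{-1}(A_i)$ lie in the canonical $\sigma$-algebra of $\Phi_0$, and shift-equivariance gives $\theta_{-x} B_i = (F^n)^{-1}(\theta_{-x} A_i)$. Applying the mixing property of $\Phi_0$ to $B_1, B_2$ yields
\begin{equation*}
\lim_{|x|\to\infty} \mathbb{P}(A_1 \cap \theta_{-x} A_2)
= \lim_{|x|\to\infty} \mathbb{P}(B_1 \cap \theta_{-x} B_2)
= \mathbb{P}(B_1)\mathbb{P}(B_2) = \mathbb{P}(A_1)\mathbb{P}(A_2),
\end{equation*}
and since mixing implies ergodicity, the same factor argument gives ergodicity as well.

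The only nontrivial point I expect is the almost-sure well-definedness of $F$, and more specifically the verification that after one step the output configuration $\Phi_1$ is still simple and still has unique nearest neighbors, so that the induction can be carried out without exceptional sets accumulating. This should follow from the absolute continuity of the joint distribution of finitely many points of the Poisson P.P., which ensures that the algebraic equalities that would spoil the map (coincident images of distinct points, or tied nearest-neighbor distances at a later stage) occur with probability zero at every finite step; the union over the countably many iterates $n$ then remains null. Once this is handled, everything else is bookkeeping around the definition of a factor.
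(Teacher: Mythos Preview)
Your proposal is correct and follows the only natural route: exhibit the one-step follower update $F$ as a measurable, translation-equivariant map on configurations, iterate, and then invoke the standard fact that factors of mixing stationary point processes are mixing. The paper in fact gives no proof of this lemma beyond the assertion encoded in the statement itself (``is a factor \ldots\ and hence is mixing''), so your write-up supplies exactly the details the paper leaves implicit, including the one genuinely delicate point---that simplicity and uniqueness of nearest neighbors persist a.s.\ at every finite iterate so that $F^n$ remains well defined.
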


\textit{Discrete event spatial simulation} (DESS) starts with sampling a Poisson point process in the fixed bounded  window $W \subset \mathbb{R}^2$ (the observation window). We pick a convex set $W$, typically a square. In order to simulate a homogeneous Poisson point process, we use the ideas from \cite{kendall_mecke}. In \cite{kendall_mecke} it is suggested that simulating in a compact region $W$ can be split into two parts. Namely, the number of points in $W$ is determined from the simulation of a Poisson random variable. Then the positions of points in $W$ are obtained from simulating a binomial point process with that number of points. Since we are interested in the point process in the whole of $\mathbb{R}^2$, we take $W$ big. Hence there are small boundary effects. 

 Then we iteratively apply the Follower Dynamics to all the points simultaneously. We still have the mixing property satisfied. The first set of simulations is done using MATLAB and applying the Monte Carlo method. For each agent, we keep track of its leader at every step. In order to calculate any frequency, we count the number of agents in the window for which our sets of conditions are satisfied, and we divide it by the total number of agents in the window. 

For example, for the frequency of leader pairs of order 0, we check for all pairs of agents and count the number of mutually closest neighbors at step 0. Since we keep track of the leader for each agent, we just check whether two agents are each others' leaders. 

Similarly, in order to determine the frequency of leader pairs of order 1, type 1, we check, for all pairs of agents, whether they had the same leader (but were not each others' leaders) at step 0 and are each others' leaders at step 1.

For the frequency of leader pairs of order 1, type 2, we check, for all pairs of agents, whether one was a leader of the other at step 0 and both are each others' leaders at step 1.

Table~\ref{table:frequencies} contains frequencies and $95\%$ confidence interval obtained using discrete event spatial simulation. In order to obtain a $95\%$ confidence interval we run simulation many times and keep track of the statistics. The statistics is obtained from $40$ samples 
containing in mean 20000 points each. For instance, observe that the frequency of leader pairs of order 1 is in the interval $[0.021, 0.023]$ with $95\%$ confidence, which is the sum of both types shown in Table~\ref{table:frequencies}. Similarly, the frequency of inversion is in the interval $[0.13, 0.14]$ with the same confidence. 


\begin{table}
\begin{center}
 \begin{tabular}{||c c  ||} 
 \hline
 Frequency of &  $95\%$ Confidence interval  \\ [0.5ex] 
 \hline\hline
 Leader pair of order 0 & $[0.6203, 0.6227]$  \\  [0.5ex]
\hline
Order 1 type 1 & $[0.005, 0.006]$ \\  [0.5ex]
\hline
Order 1 type 2 & $[0.0100, 0.0105]$  \\  [0.5ex]
\hline
4 body swap & $[0.000063, 0.0001]$  \\  [0.5ex]
\hline
Inversion Type 1 & $[0.138, 0.140]$  \\  [0.5ex]
\hline
Inversion Type 2 & $[0.00019, 0.00027]$  \\  [0.5ex]
\hline
\end{tabular} 
\caption{\label{table:frequencies} Frequencies and $95\%$ confidence intervals obtained using DESS}
 \end{center}  
\end{table}


\subsection{Integrals on Semi-Algebraic Sets}\label{semi_algebraic_numerics}

\subsubsection{Union of Balls}

When exploiting our integral geometry representations of frequencies in the examples explored in Section \ref{sc:frequency_phenomena}, we end up with expressions that are integrals involving unions of balls. Hence, this way to find the probabilities of interest requires to compute the volumes of unions of balls. In this subsection, we first represent these unions of balls as semi-algebraic sets.

The setting is $\mathbb{R}^d$. Let $k$ be a positive integer.
Let $c_1,\ldots,c_k$ be arbitrary points of $\mathbb{R}^d$ which are the centers
of balls of positive radii $r_1,\ldots,r_k$, respectively.
We are interested in the volume of
$$ \mathbb{U}:=\cup_{i=1}^k B(c_i,r_i),$$
with $B(c,r)$ the closed ball of center $c$ and radius $r$.

Let $\mathcal S$ be set of all non-empty subsets of $[1,\ldots,k]$.  
For all $s\in S$, let 
$$\mathbb{V}_s= \cap_{i\in s}^k B(c_i,r_i)\cap_{j\notin s}^k B(c_j,r_j)^c.$$
We have $\mathbb{U}= \cup_{s\in S} \mathbb{V}_s,$
where the last union is disjoint. 

Hence
$$\mid\mathbb{U} \mid= \sum_{s\in S} \mid \mathbb{V}_s \mid .$$
But for all $s$, $\mathbb{V}_s$ is the following semi-algebraic set:
\begin{eqnarray*}
|| z-x_i || & \le & r_i,\quad i\in s,\\
|| z-x_j || & > & r_j,\quad i\notin s.
\end{eqnarray*}
Hence $\mathbb{U}$ is the disjoint union of the $2^k-1$ semi-algebraic sets
$\{\mathbb{V}_s,s\in S\}$ and its volume is the sum of the volumes of these
semi-algebraic sets of $\mathbb R^d$.




\subsubsection{Numerical evaluation of $\beta^1$}
\label{sec:numerical_beta1}

In this subsection, we derive the integral representation for the quantity
$\beta^1$ using the semi-algebraic characterization of the
underlying geometric constraints.
The resulting six-dimensional integral is then evaluated numerically
by Monte Carlo quadrature.

\subsubsection*{Semi-algebraic characterization of the integration domain}

By stationarity, we fix $x_3=0$, obtaining
\begin{equation}
  \beta^1
  =
  \lambda^4
  \int_{\mathbb R^2\times\mathbb R^2\times\mathbb R^2}
  \mathbf 1_{(\tilde x_1,\tilde x_2,\tilde x_4)\in\mathcal D_0}
  \,
  e^{-\lambda |\mathbb U(\tilde x_1,\tilde x_2,\tilde x_4)|}
  \,
  {\rm d}\tilde x_1\,{\rm d}\tilde x_2\,{\rm d}\tilde x_4,
  \label{eq:beta1_integral}
\end{equation}
where $\mathcal D_0=\mathcal D\cap\{x_3=0\}.$ Writing
$
r_i=|\tilde x_i|,
d_{ij}=|\tilde x_i-\tilde x_j|,
$
the defining conditions become
\begin{equation}
  r_1 < d_{12},
  \qquad
  r_1 < d_{14},
  \qquad
  r_2 < d_{12},
  \qquad
  r_2 < d_{24},
\end{equation}
\begin{equation}
  r_4 < r_1,
  \qquad
  r_4 < r_2,
  \qquad
  d_{12} < d_{14},
  \qquad
  d_{12} < d_{24}.
  \label{eq:D0_conditions}
\end{equation}

The volume of the geometric exclusion region is
\begin{equation}
  |\mathbb U(\tilde x_1,\tilde x_2,\tilde x_4)|
  =
  \mathrm{Vol }\!\Big(
  B(0,r_4)
  \cup
  B(\tilde x_1,r_1)
  \cup
  B(\tilde x_2,r_2)
  \Big),
  \label{eq:U_def}
\end{equation}
namely the area of the union of the three disks associated with the
nearest-neighbor constraints.

Writing
\[
D_0=B(0,r_4),
\qquad
D_1=B(\tilde x_1,r_1),
\qquad
D_2=B(\tilde x_2,r_2),
\]
the inclusion--exclusion formula gives
\begin{align}
|\mathbb U|
&=
|D_0|
+
|D_1|
+
|D_2|
-
|D_0\cap D_1|
-
|D_0\cap D_2|
-
|D_1\cap D_2|
+
|D_0\cap D_1\cap D_2|.
\label{eq:IE}
\end{align}

After translating $x_3$ to the origin,
$d(0,\tilde x_1)=r_1, d(0,\tilde x_2)=r_2,$
so all pairwise overlaps depend only on
$r_1,r_2,r_4,d_{12}$.
Pairwise intersection areas are evaluated using the exact lens formula
for disk intersections \cite{Fewell2006}.

The integration domain $\mathcal D_0$ also admits an exact
semi-algebraic description.
Indeed, after squaring the inequalities in
\eqref{eq:D0_conditions}, each constraint becomes polynomial in the
coordinates of
$(\tilde x_1,\tilde x_2,\tilde x_4)\in\mathbb R^6$.
Thus $\mathcal D_0$ is a semi-algebraic subset of $\mathbb R^6$ in
the sense of real algebraic geometry.

The numerical procedure therefore separates naturally into two parts:

\begin{enumerate}
    \item an exact geometric characterization of the region
    satisfying the nearest-neighbor constraints through polynomial
    inequalities and exact overlap formulas;

    \item a numerical approximation of the resulting integral by Monte
    Carlo quadrature.
\end{enumerate}

Note that Monte Carlo sampling is used only for numerical
integration and not for identifying the region itself.
Belonging to $\mathcal D_0$ is checked exactly for each sampled
configuration by evaluating the inequalities in
\eqref{eq:D0_conditions}.

\subsubsection{Monte Carlo evaluation of the integral}

The integral~\eqref{eq:beta1_integral} is six-dimensional and involves
nonlinear geometric constraints.
In this setting Monte Carlo quadrature is considerably more effective than
deterministic quadrature schemes.

The integration domain is described explicitly by the polynomial
inequalities~\eqref{eq:D0_conditions}, so the semi-algebraic structure
is used to determine exactly whether a sampled configuration belongs to
$\mathcal D_0$.
The Monte Carlo procedure is used only to approximate the resulting
integral numerically.

We first rewrite the integral in polar coordinates.
For each point
\[
\tilde x_i=(x_i,y_i)\in\mathbb R^2,
\]
we introduce the change of variables
\[
\tilde x_i
=
(r_i\cos\theta_i,r_i\sin\theta_i),
\qquad
r_i\ge0,
\quad
\theta_i\in[0,2\pi).
\]
The Jacobian determinant of this transformation is $r_i$, so
\[
{\rm d}\tilde x_i
=
r_i\,{\rm d}r_i\,{\rm d}\theta_i.
\]

The transformed integral is evaluated by Monte Carlo quadrature over
the truncated parameter domain
\[
\mathcal Q
=
[0,R_{\max}]^3\times[0,2\pi)^3,
\]
with $R_{\max}=7$.
The truncation error decays exponentially because
\[
|\mathbb U|
\ge
\pi\max(r_1^2,r_2^2,r_4^2),
\]
so the integrand decays at least as fast as
\[
e^{-\lambda\pi r^2}.
\]

Sample configurations
\[
\xi^{(k)}
=
(\tilde x_1^{(k)},\tilde x_2^{(k)},\tilde x_4^{(k)})
\]
are generated independently in polar coordinates.
For each point, the angular variable
$\theta_i$
is sampled uniformly on
$[0,2\pi)$.
The radial variables are sampled from the density
\[
f_R(r)
=
\frac{2r}{R_{\max}^2},
\qquad
0\le r\le R_{\max},
\]
which is exactly the radial distribution induced by uniform area
measure on the disk
$B(0,R_{\max})$
expressed in polar coordinates.
This is exactly the radial distribution induced by uniform sampling on
the disk $B(0,R_{\max})$ expressed in polar coordinates.
For each sampled configuration satisfying
\eqref{eq:D0_conditions},
we compute
\[
e^{-\lambda |\mathbb U(\xi^{(k)})|}.
\]

The estimator is
\[
\hat\beta^1
=
\lambda^4
\frac1A
\sum_{k=1}^{A}
\mathbf 1_{\xi^{(k)}\in\mathcal D_0}
\,w(\xi^{(k)})
\,e^{-\lambda |\mathbb U(\xi^{(k)})|},
\]
where
$A=30000$
denotes the number of sampled configurations in each batch and
$w(\xi^{(k)})$
is the corresponding density-correction factor arising from the polar
sampling distribution.

To estimate variability, we perform
$n=10$
independent batches.

The procedure is rigorous because
\eqref{eq:beta1_integral}
is an ordinary Lebesgue integral over $\mathbb R^6$ with explicitly
specified indicator constraints.
The estimator above is the standard Monte Carlo approximation of this
integral by empirical averaging.
Since
\[
0
\le
\mathbf 1_{\xi\in\mathcal D_0}
e^{-\lambda|\mathbb U(\xi)|}
\le
1,
\]
the integrand is bounded and integrable.
Consequently, the strong law of large numbers implies that
\[
\hat\beta^1
\to
\beta^1
\qquad
\text{almost surely as }
A\to\infty.
\]

\subsubsection{Comparison}

As an independent validation of the integral computation, we estimate
the same quantity using the discrete event spatial simulation (DESS)
described in Section~5.1.

\paragraph{Numerical results.}

Table~\ref{tab:beta1}
reports the resulting estimates for
$\lambda=1$.
The close agreement between the integral estimator and the DESS provides a consistency check for both the
geometric characterization of the configuration space and the numerical
implementation of the two procedures.

\begin{table}[ht!]
\centering
\caption{
Comparison of two independent estimators of $\beta^1$ for $\lambda=1$.
}
\label{tab:beta1}
\begin{tabular}{lcc}
\toprule
Method
&
Estimate
&
95\% confidence interval
\\
\midrule

Integral
&
\texttt{0.00036}
&
\texttt{[0.00031,\ 0.00041]}
\\

DESS
&
\texttt{0.00035}
&
\texttt{[0.00031,\ 0.00040]}
\\

\bottomrule
\end{tabular}
\end{table}
\section{Asymptotic Regime}\label{sc:asymptotic}

This section focuses on the limiting behavior of parties. We analyze \textit{stable parties} in Section \ref{stable_chain}. Stable parties are special types of Parties, which are trees for which the graph structure doesn't change with steps and we defined them in Section 3.2. Such parties have a special structure. As we shall see they have no branching outside the root.  We conjecture that all the Follower Parties eventually become stable parties.

Finally, in Section \ref{long_term_relations}, we present some general long-term relations. As the number of steps in the Follower Dynamics approaches infinity, two sub-processes of the initial point process emerge. One converges in total variation - it is the ultimate leader pair point process, while the other converges weakly to its limit, the ultimate follower point process. Simulations in two dimensions show that the density of ultimate leader pairs is approximately 0.66. The remaining points, the ultimate followers, converge weakly to one of the ultimate leader pairs.

\subsection{Stable Party}\label{stable_chain}

 In this Section we focus on stable parties like in Figure~\ref{fig:whole_chain_7}, and show that for such parties we can derive exact positions of agents at any step. A simple example of a stable party is one leader pair and one follower.

\begin{figure}[h!]
\centering
\begin{minipage}{.5\textwidth}
  \centering
  \includegraphics[width=1\linewidth]{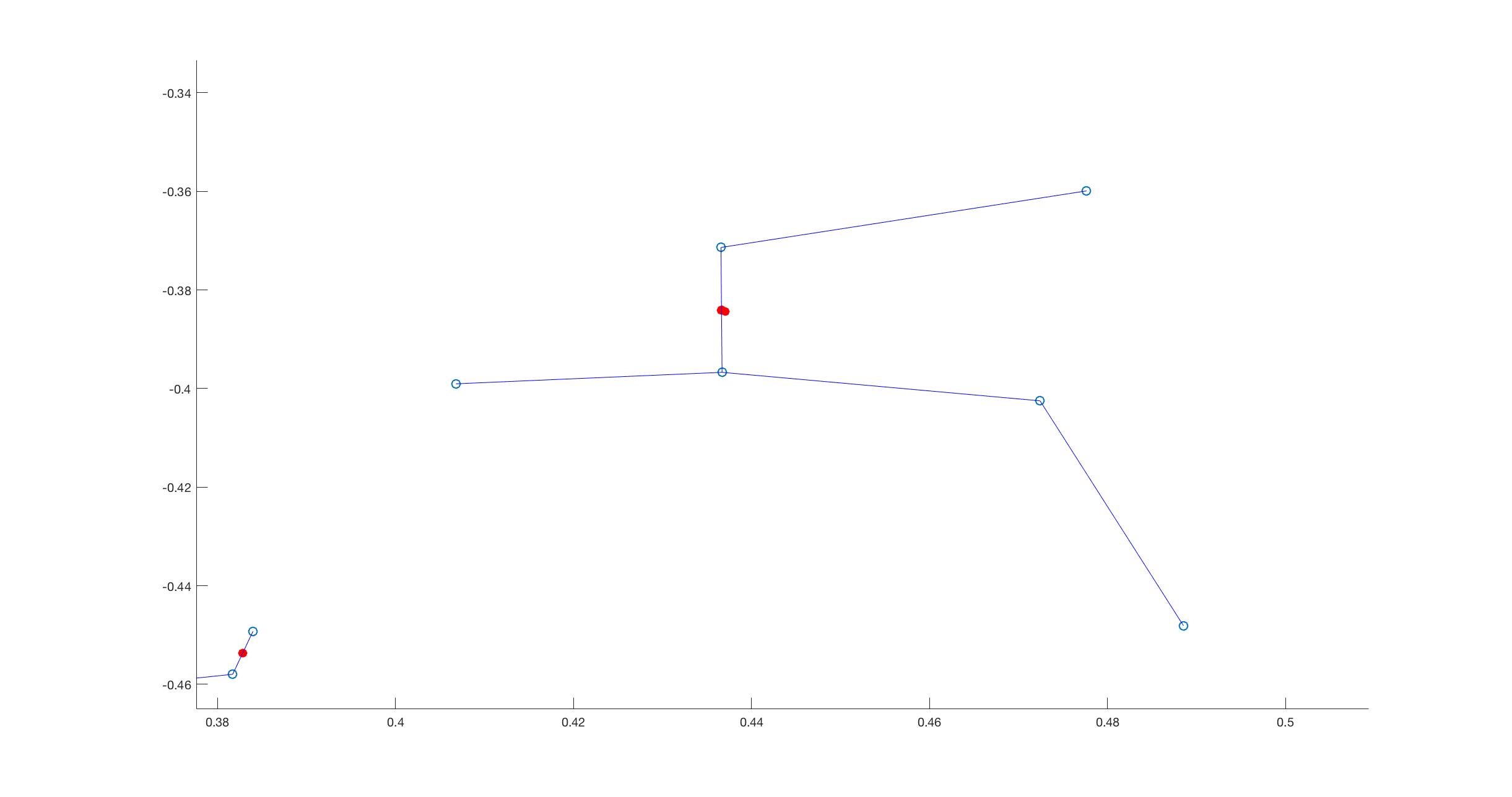}
\end{minipage}%
\begin{minipage}{.5 \textwidth}
  \centering
  \includegraphics[width=.5 \linewidth]{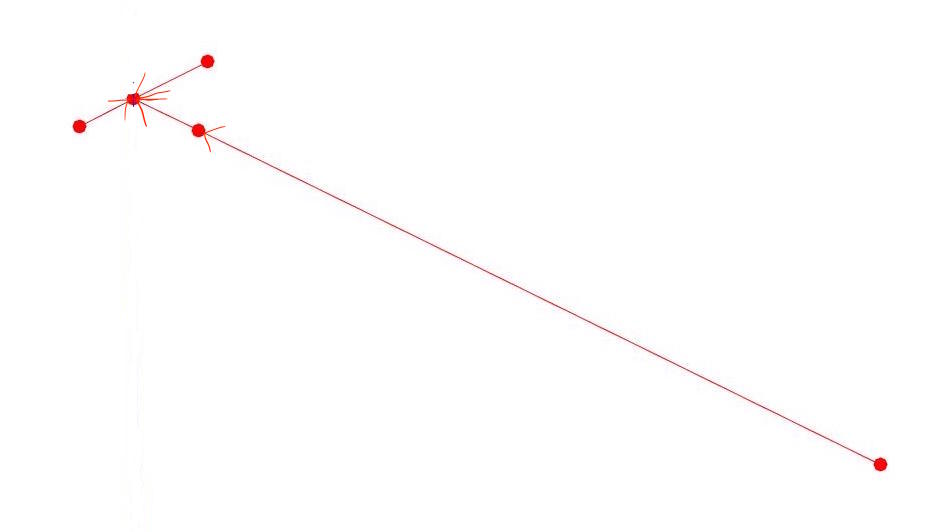}
\end{minipage}
\caption{ \label{fig:whole_chain_7}\textbf{Left:}Tree at the initial time step. \textbf{Right:} Zoomed in situation after 20 time steps}
\end{figure}

After one step of the dynamics, the leader pair becomes fixed. We therefore index all positions relative to this step. Let $a_1=a_1^{(1)}$ denote the (fixed) leader, and let $a_n=a_n^{(1)}$ denote the position of its follower of order $n-1$. The position of agent $n$ at time $i$ is denoted by $a_n^{(i)}$.

The dynamics are governed by the same rule of moving halfway to the leader's position 
\[
a_n^{(i)}=\frac{1}{2}\big(a_n^{(i-1)}+a_{n-1}^{(i-1)}\big),
\]
so each position remains a weighted sum of the initial data.


\begin{lemma}\label{formula_longterm}
If the party is stable, then for every $n \ge 1$ and $i \ge 1$,
\[
 a_n^{(i)}=
    \begin{cases} 
      \frac{1}{2^i}\displaystyle\sum_{k=0}^i {i \choose k} a_{n-k},& i< n,\\[1em]
      \frac{1}{2^i}\left(\displaystyle\sum_{k=0}^{n-1}{i \choose k} a_{n-k} +\sum_{k=n}^{i} {i \choose k} a_1\right), & i \geq n.
   \end{cases}
\]
\end{lemma}

\begin{proof}
The claim follows by induction on $i$, using the linearity of the update rule.
\end{proof}


In particular, for fixed $n$,
\[
\frac{1}{2^i}\sum_{k=0}^{n-1} {i \choose k} a_{n-k} \to 0,
\qquad
\frac{1}{2^i}\sum_{k=n}^{i} {i \choose k} a_1 \to a_1,
\quad \text{as } i \to \infty,
\]
so every agent converges to the leader.


\begin{proposition}\label{prop:no_branching}
If the party is stable, then it cannot have branching away from the root.
\end{proposition}

\begin{proof}
Suppose that branching occurs at some node $a_n$ with $n \ge 2$, and let $a_{n+1,1}$ and $a_{n+1,2}$ denote its two children.

By linearity, common ancestry terms cancel, and the distance between siblings evolves as
\[
d\big(a_{n+1,1}^{(i)}, a_{n+1,2}^{(i)}\big)
= \frac{1}{2^i} d\big(a_{n+1,1}, a_{n+1,2}\big).
\]

On the other hand, using Lemma~\ref{formula_longterm},
\[
a_{n+1,1}^{(i)} - a_n^{(i)}
= \frac{1}{2^i}\sum_{k=0}^{n-1} \binom{i}{k}\big(a_{n+1-k,1} - a_{n-k}\big).
\]
The leading term corresponds to $k=n-1$ and grows like $\binom{i}{n-1}$, while all remaining terms are of strictly lower order. Hence
\[
\big|a_{n+1,1}^{(i)} - a_n^{(i)}\big|
= \frac{1}{2^i}\big(c\, i^{n-1} + O(i^{n-2})\big),
\]
for some $c>0$.

Comparing the two distances, both decay at rate $2^{-i}$, but the distance to the parent carries an additional polynomial factor $i^{n-1}$. Therefore, for all sufficiently large $i$,
\[
d\big(a_{n+1,1}^{(i)}, a_n^{(i)}\big)
>
d\big(a_{n+1,1}^{(i)}, a_{n+1,2}^{(i)}\big),
\]
contradicting stability.
\end{proof}


\noindent
\emph{Remark.} Linearity implies that sibling distances contract purely exponentially, while distances to ancestors retain polynomial memory of the initial configuration, forcing eventual separation.


The proposition suggests that stable configurations are necessarily chain-like. This leads to the following conjectures.

\begin{conjecture}
For every leader pair of any order, there exists a time after which the associated party becomes stable.
\end{conjecture}

\begin{conjecture}
Starting from a Poisson point process in the plane, the limiting configuration consists only of ultimate leader pairs and their followers arranged in chains.
\end{conjecture}

\subsection{Long Term Relations}\label{long_term_relations}

In this subsection we look at some mass transport relations that hold in general in the limiting process. 
Limiting objects consist of ultimate leader pairs and ultimate followers, we can derive a relation between them. Let $\lambda=1$ be the intensity of the initial Poisson point process. Let $\lambda_f$ be the intensity of the ultimate followers and $\lambda_l$ be the intensity of the ultimate leaders. Take $\bar{N}$ to be the mean number of ultimate followers of one ultimate leader. Then by the mass transport principle it directly follows that
\[1\cdot \lambda_f=\bar{N}\lambda_l,\]
using that $\lambda_f+\lambda_l=1$, implies that
\[\bar{N}=\frac{1}{\lambda_l}-1.\]
Since there are two ultimate leaders in a pair, the total number of ultimate followers is $N=2\bar{N}$. 
This relation should hold regardless of the dimensions.

Now if we plug in $\lambda_l=\frac{2}{3}$, like obtained in the simulations, we get that $N=1$. Implying that the mean size of a party is 3 in the limit, which is also consistent with the simulations.

\begin{conjecture}
There exist ultimate followers.
\end{conjecture}

\section{Conclusion}

In summary, we introduced a new model inspired by problems in opinion dynamics. We described various phenomena related to this dynamics and examined the system's long-term behavior. We provided a systematic way to calculate the frequencies of specific configurations, such as the densities of certain leader pairs. Additionally, we numerically analyzed the long-term behavior of parties and derived general results about the shapes of limiting parties. For future work, we plan to prove that parties remain finite at all steps, supported by simulations. Furthermore, we aim to demonstrate that the limiting objects can only be either ultimate leader pairs or ultimate followers. We also plan to explain the observed ratio between the densities of the ultimate leader pair point process and the ultimate follower point process. Finally, we plan to extend these results to higher dimensions.
\begingroup
  
\endgroup

\end{document}